\numberwithin{equation}{section}
\theoremstyle{plain}
\newtheorem{thm}{Theorem}[section]
\newtheorem{theorem}[thm]{Theorem}
\newtheorem{lem}[thm]{Lemma}
\newtheorem{prop}[thm]{Proposition}
\newtheorem{cor}[thm]{Corollary}
\newcommand\theoref{Theorem~\ref}
\newcommand\lemref{Lemma~\ref}
\newcommand\propref{Proposition~\ref}
\newcommand\corref{Corollary~\ref}
\newcommand\defref{Definition~\ref}
\def\secref{Section~\ref}
\theoremstyle{definition}
\newtheorem{defin}[thm]{Definition}
\newtheorem{rem}[thm]{Remark}
\DeclareMathOperator{\cat}{{\mbox{\rm cat$_{\rm LS}$}}}
\DeclareMathOperator{\syscat}{{\rm cat_{sys}}}
\DeclareMathOperator{\stsyscat}{{\rm cat_{stsys}}}
\DeclareMathOperator{\cl}{{\rm cl}}
\DeclareMathOperator{\stsys}{{\rm stsys}}
\DeclareMathOperator{\sysh}{{\rm sysh}}
\DeclareMathOperator{\vol}{{\rm vol}}
\def\Im{\protect\operatorname{Im}}
\def\Int{\protect\operatorname{Int}}
\def\gf{\varphi}
\def\ga{\alpha}
\def\gb{\beta}
\def\gl{\lambda}
\def\Z{{\mathbb Z}}
\def\Q{{\mathbb Q}}
\def\R{{\mathbb R}}
\def\N{{\mathbb N}}
\def\ds{\displaystyle}
\def\1{\hbox{\rm\rlap {1}\hskip.03in{\rom I}}}
\def\Bbbone{{\rm1\mathchoice{\kern-0.25em}{\kern-0.25em}
{\kern-0.2em}{\kern-0.2em}I}}
\def\ds{\displaystyle}
\def\m{\medskip}
\def\ov{\overline}
\def\la{\langle}
\def\ra{\rangle}
\long\def\forget#1\forgotten{} %
\newcommand{\gmetric}{{\mathcal G}}
\newcommand\gm{\gmetric}
\newcommand\ver[1]{\marginpar{\tiny Changed in Ver \VER}}
\date{\today}
\begin{document}

\centerline{Dedicated to Stephen Smale in occasion of his 80th birthday}

\vskip 2cm

\title[Stable Systolic Category]{Stable Systolic Category of Manifolds and the
Cup-length}

\author[A.~Dranishnikov]{Alexander N. Dranishnikov$^{1}$} %
\thanks{$^{1}$Supported by NSF, grant DMS-0604494}

\author[Yu.~Rudyak]{Yuli B. Rudyak}

\address{Alexander N. Dranishnikov, Department of Mathematics, University
of Florida, 358 Little Hall, Gainesville, FL 32611-8105, USA}
\email{dranish@math.ufl.edu}

\address{Yuli B. Rudyak, Department of Mathematics, University
of Florida, 358 Little Hall, Gainesville, FL 32611-8105, USA}
\email{rudyak@math.ufl.edu}

\subjclass{Primary 55M30, Secondary 53C23, 57N65}

\keywords{Cup-length, Lusternik--Schnirelmann category, systoles, systolic category}

\begin{abstract}
It follows from a theorem of Gromov that the stable systolic
category $\stsyscat M$ of a closed manifold $M$ is bounded from
below by $\cl_{\Q}(M)$, the rational cup-length of $M$ \cite{SGT}.
In the paper we study the inequality in the opposite direction. In
particular, combining our results with Gromov's theorem, we prove
the equality $\stsyscat (M)=\cl_{\Q} M$ for simply connected
manifolds of dimension $\le 7$.
\end{abstract}

\maketitle

\tableofcontents

\section{Introduction}
All manifolds are assumed to be smooth. The stable systolic category $\stsyscat M$ of a closed manifold $M$ is a natural number that measures a complexity of $M$ in the
language of differential geometry though it does not depend on a
Riemannian metric on $M$. This invariant (as well as some
generalizations) was introduced by Katz and Rudyak
\cite{KR1},\cite{SGT} on the basis of the following theorem of
Gromov~\cite{Gr1,SGT}:

\begin{theorem}[Gromov]\label{t:gromov}
Suppose that for a closed $n$-dimensional manifold $M^n$ the cup
product $a_1\cup\dots\cup a_d$ is non-trivial for some $a_i\in
H^{k_i}(M^n;\Q)$ with $k_1+\dots+k_d=n$. Then there is a constant
$C>0$ such that
\begin{equation}\label{eq:main}
\stsys_{k_1}M^n\dots\stsys_{k_d}M^n\le C\vol(M^n)
\end{equation}
for every Reimannian metric on $M^n$.
\end{theorem}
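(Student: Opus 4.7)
The plan is a calibration argument. Use Federer's duality: on $(M^n,g)$, the stable norm on $H_k(M^n;\R)$ and the comass norm $\|\cdot\|^*$ on $H^k(M^n;\R)$ are dual norms, where $\|a\|^*=\inf\{\|\omega\|_\infty:\omega\text{ closed},\ [\omega]=a\}$ with $\|\omega\|_\infty=\sup_x|\omega|_x$ the pointwise comass. After clearing denominators one may assume $a_i\in H^{k_i}(M^n;\Z)/\Tors$, which only changes the constant $C$ and preserves non-vanishing of the cup product.

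The key step is to produce, in each metric $g$, a $\Z$-basis $\{\beta_{i,1},\dots,\beta_{i,b_i}\}$ of $H^{k_i}(M^n;\Z)/\Tors$ all of whose comass norms satisfy $\|\beta_{i,j}\|^*\le C_i/\stsys_{k_i}(M^n,g)$, for a constant $C_i$ that depends only on the Betti number $b_i$. This uses the Banaszczyk--Minkowski transference inequality
$$\lambda_j\bigl(L^*,\|\cdot\|^*\bigr)\cdot\lambda_{b-j+1}\bigl(L,\|\cdot\|_{\mathrm{stab}}\bigr)\le c_{b}$$
applied to $L=H_{k_i}(M^n;\Z)/\Tors$ and its dual lattice $L^*=H^{k_i}(M^n;\Z)/\Tors$: taking $j=b_i$ and using $\lambda_1(L,\|\cdot\|_{\mathrm{stab}})=\stsys_{k_i}(M^n,g)$ gives $\lambda_{b_i}(L^*)\le c_{b_i}/\stsys_{k_i}(M^n,g)$, and Mahler's short-basis lemma converts the successive-minima bound into an actual $\Z$-basis bound.

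Now cup product is a $\Z$-multilinear map $(H^{k_1}/\Tors)\otimes\dots\otimes(H^{k_d}/\Tors)\to H^n(M^n;\Z)/\Tors\cong\Z$ and is nonzero by the hypothesis, so by multilinearity it is nonzero on at least one basis tuple, giving $|\langle\beta_{1,j_1}\cup\dots\cup\beta_{d,j_d},[M^n]\rangle|\ge 1$ for some choice of $j_i$. Pick closed representatives $\omega_i\in\Omega^{k_i}(M^n)$ of $\beta_{i,j_i}$ with $\|\omega_i\|_\infty\le 2C_i/\stsys_{k_i}(M^n,g)$ (possible by definition of $\|\cdot\|^*$). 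Using the pointwise comass inequality $|\omega_1\wedge\dots\wedge\omega_d|_x\le\prod_i|\omega_i|_x$, one obtains
$$1\le\Bigl|\int_{M^n}\omega_1\wedge\dots\wedge\omega_d\Bigr|\le\vol(M^n)\prod_{i=1}^d\frac{2C_i}{\stsys_{k_i}(M^n,g)},$$
which rearranges to \eqref{eq:main} with $C=\prod_{i=1}^d 2C_i$, a constant depending only on the topology of $M^n$.

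The main obstacle is the short-basis step: verifying that Banaszczyk's transference and Mahler's lemma apply cleanly to the metric-dependent comass/stable lattice pair on the cohomology/homology of $M^n$, with transference constants depending only on the Betti numbers. Note that the naive calibration using the originally given $a_i$'s is \emph{not} enough, since the comass $\|a_i\|^*$ of a fixed rational class can blow up as $g$ degenerates (already visible on $T^2$ with a sheared flat metric); the metric-dependent choice of short integer basis of $H^{k_i}(M^n;\Z)/\Tors$ is precisely what makes the resulting constant uniform in $g$.
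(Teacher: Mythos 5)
The paper does not prove this theorem: it is stated as Gromov's theorem and cited from \cite{Gr1,SGT}, so there is no in-text argument to compare against. Your proposal is the standard modern proof of Gromov's stable systolic inequality and is essentially correct. The chain you describe --- Federer duality making the stable norm on $H_k(M;\R)$ and the comass norm on $H^k(M;\R)$ dual, Banaszczyk's transference bound $\lambda_b(L^*)\cdot\lambda_1(L)\le c_b$ applied to the lattice $L=H_{k_i}(M;\Z)/\Tors$ with $\lambda_1=\stsys_{k_i}$, Mahler's lemma converting successive minima into an honest short $\Z$-basis, nonvanishing of the integral cup-product form on some basis tuple, and then calibrating $[M]$ by a wedge of closed forms of small comass --- is exactly the right skeleton, and your closing remark is the correct diagnosis of the main subtlety: the metric-dependent choice of short integral basis is what keeps $C$ uniform in $g$, because the comass of a \emph{fixed} rational class can degenerate with the metric.

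One point needs repair. Pointwise comass is \emph{not} submultiplicative under wedge product: for example, with $\alpha = e^1\wedge e^2 + e^3\wedge e^4$ on $\R^4$ one has comass $\|\alpha\|=1$ by Wirtinger's inequality, yet $\alpha\wedge\alpha = 2\,e^1\wedge e^2\wedge e^3\wedge e^4$ has comass $2$. The bound you actually get, by summing over $(k_1,\dots,k_d)$-shuffles in an orthonormal frame, is
\begin{equation*}
|\omega_1\wedge\cdots\wedge\omega_d|_x\;\le\;\binom{n}{k_1,\ldots,k_d}\,\prod_{i=1}^d|\omega_i|_x .
\end{equation*}
Since the multinomial coefficient depends only on the partition and hence only on the topology of $M$, it merely inflates the constant $C$ and the conclusion is unchanged; but the inequality as you wrote it is false and should carry this factor.
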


The invariant $\stsyscat M$ is defined as the maximal $d$ such that
equality \eqref{eq:main} holds. Thus, one can say that $\stsyscat
M\ge\cl_{\Q}M$ where $\cl_{\Q}M$ the rational cup-length of $M$.
This paper can be considered as an attempt to check whether
$\stsyscat M$ coincides with the rational cup-length. Though we
obtained some coincidence results in low dimensions, we failed to do
it in the general case. Moreover, we have got convinced that these
two invariants are different. We believe that it would be an
interesting project to construct a manifold distinguishing
$\stsyscat$ and $\cl_{\Q}$. In the cases where the coincidence
$\stsyscat M=\cl_{\Q}M$ is proven (Theorem~\ref{t:reallength} and
Theorem~\ref{t:reallength2}) one can ask about possible inversion of
Gromov's Theorem. It turns out that this is also a challenging
problem which is done only in some partial cases, see
Remark~\ref{inversion}.

\m Here are the formal definitions. Suppose that we are given a
manifold~$M^n$ with a Riemannian metric $\gm$. Let $\Delta^k$ be the
standard $k$-simplex in $\R^{k+1}$. Consider a singular
smooth~$k$-simplex~$\sigma: \Delta^k \to M$,~$1\le k\le n$. Here we
say that $\sigma$ is smooth if it extends to an open neighborhood of
$\Delta^k$ in $\R^{k+1}$. In particular, $\sigma$ is a Lipschitz
map. Note that the pullback~$\sigma^*\gm$ is positive semidefinite.
Thus, we can speak on the volume of~$\Delta^k$ with respect
to~$\sigma^*\gm$, and we set
\begin{equation*}
\vol_k\sigma=\vol(\Delta^k, \sigma^*\gm).
\end{equation*}

\m We define a smooth singular chain to be a formal linear combination of smooth singular simplices. Given a real or integral singular smooth~$k$-cycle~$c=\sum_i
r_i\sigma_i$, we define the volume of~$c$ as
\begin{equation}
\label{eq:vol}
\ds\vol_k(c):= \sum_i|r_i|\vol_k(\sigma_i).
\end{equation}
For an {\em integral\/} homology class~$\ga$, we define the
volume~$\vol(\alpha)$ by setting
\begin{equation}
\label{eq:integral}
\vol(\ga) = \inf \left\{ \vol_k(c) \mid c\in C_k(M), \quad
[c]=\alpha \right\}
\end{equation}
i.e. the infimum of volumes of {\em integral\/} smooth cycles
representing~$\ga$.  Now we can define the homology systole by setting
\begin{equation*}
\sysh_k(M,\gm)=\inf \left\{ \vol(\ga)\bigm|\ga\in H_k(M)\setminus
\{0\} \right\}.
\end{equation*}
It was originally pointed out by Gromov (see \cite{Be}) that this
definition has a shortcoming.  Namely, for~$S^1\times S^3$ one
observes a ``systolic freedom'' phenomenon, in that the inequality
\begin{equation}\label{e:freedom}
\sysh_1\sysh_3\le C\vol(S^1\times S^3)
\end{equation}
is violated for any~$C\in \R$, by a suitable metric~$\gm$
on~$S^1\times S^3$ \cite{Gr2,Gr3}.
This phenomenon can be overcome by a process of stabilization as
follows.  Given a {\it real} homology class~$\gb\in H_k(M;\R)$, define
the {\it stable norm} of~$\gb$ as
\begin{equation}\label{e:norm}
||\gb||=\inf \left\{ \vol_k(c)\bigm|[c]=\gb \right\}
\end{equation}
where~$c=\sum r_i\sigma_i$,~$r_i\in \R$ is a real singular cycle
representing~$\gb$.  It is known that~$||\cdot ||$ is indeed a norm
on~$H_k(M;\R)$, \cite{Fed, Gr3}.  Furthermore, if~$\ga\in H_k(M)$ is
an integral homology class and~$\ga_{\R}\in H_k(M;\R)$ is the image
of~$\ga$ under the coefficient homomorphism defined by the
inclusion~$\Z \to \R$, then
\begin{equation*}
||\ga_{\R}||=\lim_{k\to\infty}\frac{\vol(k\ga)}{k},
\end{equation*}
where ``$\vol$'' in the numerator is as in \eqref{eq:integral}.  Now we define
the {\it stable~$k$-systole\/} by setting
\begin{equation}
\label{eq:stable} \stsys_k(M,\gm)=\inf\left\{ \|\gb \| \bigm|\gb\in
H_k(M;\Z)_{\R} \setminus \{0\}\right\}
\end{equation}
where $H_k(M;\Z)_{\R}$ denote the image of the coefficient
homomorphism $H_k(M;\Z)\to H_k(M;\R)$.

 \m The definition below is a stable version of systolic category defined
in~\cite{KR1}.

\begin{defin}
\label{d:syscat}
Let~$M$ be a closed~$n$-dimensional manifold, and let~$d\geq 1$ be an
integer.  Consider a partition
\begin{equation}
\label{eq:partition}
n= k_1 + \ldots + k_d,\quad k_1\le k_2\le \cdots \le k_d
\end{equation}
where~$k_i\geq 1$ for all~$i=1,\ldots, d$.  We say that the
partition (or the~$d$-tuple~$(k_1, \ldots, k_d))$ is {\em stable
categorical} for~$M$ if the inequality
\begin{equation*}
\stsys_{k_1}(\gmetric) \stsys_{k_2}(\gmetric) \ldots
\stsys_{k_d}(\gmetric) \leq C(M) \vol_n(\gmetric)
\end{equation*}
is satisfied by all metrics~$\gmetric$ on~$M$, where the
constant~$C(M)$ depends only on the topological type of~$M$, but not
on~$\gmetric$.

The {\em size\/} of a partition $n= k_1 + \ldots + k_d$ is defined
to be the integer~$d$.

The {\em stable systolic category} of~$M$, denoted~$\stsyscat(M)$, is the
largest size of a categorical partition for~$M$.
\end{defin}

In particular, we have~$\stsyscat M \le \dim M$.

\m In the paper we use homotopy theory in order to make upper
estimates of  $\stsyscat M$ by means of the cup-length. For
simply-connected manifolds of dimension $\le 7$ we prove the
equality $\stsyscat M=\cl_{\Q} M$. In the non-simply connected case
we show that $\stsyscat M\le \widetilde{\cl} M$ for closed manifolds
of dimension $\le 5$ where $\widetilde{\cl}M$ denote the twisted
cup-length of $M$. The later result gives one more evidence for
M.~Katz' conjecture connecting the systolic category and the
Lusternik-Schnirelmann category of manifolds by the inequality
$\syscat M\le\cat M$ \cite{KR1,SGT,DKR1,DKR2}. We recall that $\cat
M$ is the minimal $n$ such that $M$ admits a cover by $n+1$ open
sets that are contractible in $M$.

The paper is organized as follows. In Section~\ref{s:skeleta} we
prove some auxiliary results from homotopy theory, in
Section~\ref{s:lemmata} we prove some preliminary results on
relations between the systolic category and the cup-length, and in
Section~\ref{s:cuplength} we prove main results.

\section{Deformations of maps of complexes into skeleta}\label{s:skeleta}

\begin{defin}
\label{d:degree}
Let~$f: M^n\to K^n$ be a map of a closed orientable manifold~$M$ to an
$n$-dimensional CW space $K$, and let~$e$ be an open~$n$-cell of~$K$.

Consider the map $h=h(f,e): M \stackrel{f}\longrightarrow K \to
K/(K\setminus e)\cong S^n.$ We assume that~$M$ and~$e$ are oriented.
We put~$\deg_ef=\deg h$ and call~$\deg_ef$  {\em the degree of~$f$
at~$e$}.
\end{defin}

Clearly, if $f, f': M \to K$ are homotopic then $\deg_ef=\deg_ef'$.

\begin{defin} We say that a map~$f: X \to Y$ of path connected spaces is
{\em~$\pi$-surjective} if~$f_*: \pi_1(X) \to \pi_1(Y)$ is an
epimorphism.
\end{defin}

\begin{lem}\label{l:degree}
Let~$f: M^n\to K^n$ be a map of a closed orientable manifold~$M$ to an
$n$-dimensional CW space $K$ with $n>2$. Let $e_1, \ldots, e_q$ be open $n$-cells
in $K$. If $f$ is $\pi$-surjective and $\deg_{e_j} f=0$ for $j=1, \ldots, q$, then~$f$
is homotopic to a map $g: M \to K$ such that $g(M)\cap
e_j=\emptyset$ for $j=1,\ldots, q$.
\end{lem}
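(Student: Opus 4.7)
The plan is to cancel preimages of a chosen point $p_j\in e_j$ pairwise by a Whitney-type move, with the resulting obstruction killed using $\pi$-surjectivity. I treat the cells inductively on $q$: after homotoping $f$ to miss $e_1,\dots,e_{j-1}$, I eliminate $e_j$. The dimension hypothesis $n>2$ is essential here, because attaching or removing open $n$-cells preserves $\pi_1$, so $\pi_1\bigl(K\setminus\bigcup_{i<j}e_i\bigr)=\pi_1(K)$ and the current map remains $\pi$-surjective, while $\deg_{e_j}f=0$ is preserved under homotopy.

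For a single cell $e:=e_j$, I first smooth $f$ and make it transverse to a regular value $p\in e$, so that $f^{-1}(p)=\{x_1,\dots,x_m\}$ has signs summing to $0$. Pair up opposite-sign points, and for each pair $(x_+,x_-)$ choose an embedded arc $\gamma\subset M$ from $x_+$ to $x_-$ avoiding the other $x_i$. A closed tubular neighborhood $N\cong D^n$ of $\gamma$ exists because $\gamma$ is contractible (so the normal bundle is trivial), and by transversality $f(\partial N)\subset K\setminus p$.

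Using $\pi$-surjectivity, I modify $\gamma$ by prepending a loop $\ell$ at $x_+$ with $f_*[\ell]=[f\circ\gamma]^{-1}$, so that after this modification $[f\circ\gamma]=1\in\pi_1(K)$. The pair $(K,K\setminus e)$ is obtained by attaching one $n$-cell to $K\setminus e$, so a universal-cover-plus-Hurewicz computation identifies $\pi_n(K,K\setminus e)$ with the free $\Z[\pi_1(K\setminus e)]=\Z[\pi_1(K)]$-module on the characteristic map $[\Phi]$ of $e$. In this module, the class of $(f|_N,f|_{\partial N})$ equals $(1-[f\circ\gamma])\cdot[\Phi]$, which vanishes. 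Hence $f|_N$ is homotopic rel $\partial N$ to a map into $K\setminus e$; iterating over pairs empties $f^{-1}(p)$, after which composing with the deformation retraction $K\setminus p\to K\setminus e$ yields a map missing $e$ entirely.

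The main technical obstacle is the identification of $\pi_n(K,K\setminus e)$ and the formula for $[f|_N]=(1-[f\circ\gamma])\cdot[\Phi]$, which amount to the CW analog of the classical Whitney cancellation formula and require care with basepoints and the $\pi_1$-action. Each hypothesis then plays a clear role: orientability gives signed degrees; $n>2$ preserves $\pi_1$ under $n$-cell attachments; $\pi$-surjectivity realizes arbitrary $\pi_1(K)$-elements as $[f\circ\gamma]$ for suitably modified $\gamma$; and $\deg_{e_j}f=0$ makes the pairing possible.
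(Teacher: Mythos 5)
Your argument is correct and rests on the same underlying idea as the paper's proof (the Hopf/Whitney trick: pair up opposite-sign preimages of a regular value, connect them by an arc, and use $\pi$-surjectivity to make the arc's $f$-image null-homotopic so the pair can be cancelled), but it is packaged differently. The paper works entirely geometrically: it joins two preimage balls $U_1,U_{k+1}$ by a tube $T$ around $\Im\phi$ to form a ball $D$, and then argues directly that $f|_{\partial D}$ is null-homotopic in $K\setminus\Int V$ (citing Epstein), producing the homotopy by hand. You instead pass to the algebraic formulation $\pi_n(K,K\setminus e)\cong\Z[\pi_1(K)]\langle[\Phi]\rangle$ via the universal cover and Hurewicz, and invoke the Whitney cancellation formula $[f|_N]=(1-[f\circ\gamma])[\Phi]$; this is a cleaner bookkeeping device but requires the module identification and sign/basepoint conventions you flagged. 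You also handle the induction on $q$ differently: you shrink the target to $K'=K\setminus\bigcup_{m<j}e_m$ (using $n>2$ to preserve $\pi_1$ and to transport $\pi$-surjectivity and $\deg_{e_j}=0$ to $K'$), whereas the paper keeps $K$ as the target and instead chooses the tubes to avoid marked points $a_m\in e_m$ so that the new homotopy does not re-enter the already-cleared cells. Both routes buy the same conclusion; yours is slightly more modular, the paper's slightly more elementary. One small point worth making explicit: after prepending the loop $\ell$, the concatenated path $\ell*\gamma_0$ self-intersects at $x_+$, so one needs a general-position perturbation (available since $2\cdot 1<n$ for $n>2$) to recover an \emph{embedded} arc with the same $f$-image homotopy class; the paper sidesteps this by choosing $\phi$ to run from $\partial U_1$ to $\partial U_{k+1}$ rather than through the preimage points themselves.
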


\begin{proof} This is an application of the Hopf trick, cf.~\cite[Theorem
4.1]{Eps}. We perform induction on $q$ and construct a map $f_j: M \to K$ such that
$f\cong f_j$ and $f_j(M)\cap e_m=\emptyset$ for $m\le j$.

First, set $j=1$ and denote $e_1$ by $e$. Consider an open subset
$O$ of $e$ such that $\ov O \subset e$. Without loss of generality,
we can assume that $f_{|f^{-1}O} : f^{-1}O \to O$ is smooth. Take a regular point
$y\in O$ with the preimage $f^{-1}(y)=\{x_1,\dots x_{2k}\}$. Let $V$
and $U_1,\dots, U_{2k}$ be neighborhoods of $y$ and $x_1,\dots,
x_{2k}$, respectively, that are homeomorphic to the closed $n$-ball
and such that $f|_{U_i}:U_i\to V$ is a homeomorphism. Without loss
of generality we may assume that for the local degrees
$\deg_{x_i}f=-\deg_{x_{k+i}}f$ for all $i\le k$. The
$\pi$-surjectivity of $f$ ensures that there is a smooth path $\phi$
from $\partial U_1$ to $\partial U_{k+1}$ without self-crossings and
such that $f\phi$ is a null-homotopic loop in $K\setminus \{y\}$.
(Here we use the condition $n>2$.) We may assume that $\phi$ has
nowhere zero derivative and $\Im \phi$ does not intersect $\Int
U_1$, $\Int U_{k+1}$ and $U_m$ for $m\ne 1,k+1$. Then we attach to
the balls $U_1$ and $U_{1+k}$ a tube $T$, a closed regular
neighborhood of $\Im(\phi)$, to obtain a closed $n$-ball $D$ with
$f(\partial D)\subset M \setminus \Int V$ and $D\cap U_m=\emptyset$
for $m\ne 1, k+1$. In view of the degree condition
$\deg_{x_1}f+\deg_{x_{k+1}}f=0$ and the fact that $f\phi$ is
null-homotopic in $K\setminus \{y\}$ we conclude that the map
$f|_{\partial D_i}:\partial D_i\to K \setminus \Int V$ is
null-homotopic. So, $f$ is homotopic to a map $h$ with $h(D)\subset
K\setminus \Int V$ and $h=f$ on $M\setminus D$. Now,
$h^{-1}(y)=\{x_2, \ldots, x_k, x_{k+2},\ldots, x_{2k}\}$, and an
obvious induction completes the proof for $j=1$.

\m Now suppose that we constructed a map $f_{j-1}$ with some $j\le
q$, and construct $f_{j}$. For each $m<j$ choose a point $a_m\in
e_m$. Now for the cell $e_j$ we do the same procedure as above, by
taking regular value $y_j\in e_j$, its inverse images $x_{j,i},
i=1,\ldots, 2k_j$, etc. and, eventually, a disk $D_j$. However,
here we choose path $\phi_{j}$ so that the tubes $f_j(T{j})$ do
not contain $a_m$'s.  Now we argue as in case $j=1$ and get a map
$h: M \to K$ with $h(M)\cap e_j=\emptyset$ and $a_m\notin h(M)$.
Because of the last condition, we can deform $h$ to a map $f_j: M\to
K$ with $f_j(M)\cap e_m=\emptyset$ for $m\le j$. Now put $g=f_q$.

\end{proof}

\begin{lem}\label{l:skeleton} Let~$f : M^n\to K, n>2$ be a~$\pi$-surjective map of
a closed orientable manifold~$M$ to an arbitrary CW space. If
$f_*:H_n(M) \to H_n(K)$ is the zero homomorphism then~$f$ can be
deformed into the $(n-1)$-skeleton of~$K$.
\end{lem}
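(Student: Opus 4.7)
The plan is to first apply cellular approximation, then use the vanishing of $f_*$ in degree $n$ to modify $f$ by a homotopy until every degree $\deg_{e}f$ on an $n$-cell of $K$ vanishes, and finally invoke Lemma~\ref{l:degree} to push the modified map off the $n$-cells entirely.

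By cellular approximation I may assume $f(M)\subseteq K^{(n)}$. Since $M$ is compact, $f(M)$ meets only finitely many open $n$-cells $e_1,\dots,e_q$ of $K$; set $\alpha_j=\deg_{e_j}f$. Under the canonical identification $C_n^{CW}(K)=H_n(K^{(n)},K^{(n-1)})=\bigoplus_{e}\Z\langle e\rangle$, the class $f_*[M]\in H_n(K^{(n)})$ corresponds to the cellular cycle $\sum_{j=1}^q\alpha_j[e_j]$, because the projection $H_n(K^{(n)})\to H_n\!\left(K^{(n)}/(K^{(n)}\setminus e)\right)=\Z$ reads off $\deg_e f$ by Definition~\ref{d:degree}.

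The hypothesis $f_*[M]=0$ in $H_n(K)$ says this cycle is null-homologous, hence a cellular boundary: there is a finite cellular $(n+1)$-chain $\sum_{k}\beta_k[c_k]$ with $\partial\bigl(\sum_{k}\beta_k[c_k]\bigr)=\sum_j\alpha_j[e_j]$. Each attaching map $\phi_k\colon S^n\to K^{(n)}$ is null-homotopic in $K$, since the cell $c_k$ itself provides a null-homotopy. I modify $f$ by ``$-\beta_k$-fold surgery along $\phi_k$'' on pairwise disjoint small balls $B_k\subset M$: after a preliminary small homotopy I arrange that $f$ collapses a concentric subball $B_k'\subset B_k$ to a single point $y_k\in K^{(n)}$, and redefine $f$ on $B_k'$ as the composite $B_k'\to B_k'/\partial B_k'\cong S^n\xrightarrow{-\beta_k\phi_k}K^{(n)}\hookrightarrow K$ (with a path in $K$ reconciling basepoints). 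The resulting map $\widetilde f$ is homotopic to $f$ in $K$, and for every open $n$-cell $e$,
\begin{equation*}
\deg_e\widetilde f=\deg_ef-\sum_k\beta_k\deg_e\phi_k=0,
\end{equation*}
because the subtracted quantity is precisely the $e$-coordinate of $\partial\sum_k\beta_k[c_k]=\sum_j\alpha_j[e_j]$.

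Finally $\widetilde f\simeq f$ is $\pi$-surjective, takes values in $K^{(n)}$, meets only finitely many open $n$-cells, and has zero degree on each of them. Applying Lemma~\ref{l:degree} with these finitely many cells playing the role of $e_1,\dots,e_q$ produces a further homotopy deforming $\widetilde f$ off all open $n$-cells, i.e.\ into $K^{(n-1)}$, which is the desired conclusion. The main technical point I expect to have to justify carefully is the surgery step: one has to check that ``adding'' null-homotopic attaching maps of $(n+1)$-cells to $f$ preserves its homotopy class while shifting the cellular degrees by the cellular boundary. Once this bookkeeping is granted, the reduction to Lemma~\ref{l:degree} is automatic.
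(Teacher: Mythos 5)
Your proof is correct, and it supplies an argument that the paper's own proof elides. The paper disposes of the lemma in one sentence, asserting that $f_*\colon H_n(M)\to H_n(K)$ being zero forces $\deg_ef=0$ for every $n$-cell $e$, and then applies Lemma~\ref{l:degree}. But after cellular approximation, $\deg_ef$ is the $e$-coefficient of the cellular cycle $f_\#[M]\in Z_n^{CW}(K)$, and the hypothesis $f_*[M]=0$ in $H_n(K)\cong H_n^{CW}(K)$ only tells you that this cycle is a cellular \emph{boundary}, not that it vanishes. When $K$ has cells of dimension greater than $n$, a boundary may well have nonzero coefficients: take $M=S^n$, $K=D^{n+1}$ with the CW structure $e^0\cup e^n\cup e^{n+1}$, and $f$ the inclusion of the boundary sphere; then $H_n(K)=0$ so $f_*=0$, yet $\deg_{e^n}f=1$. (The root of the subtlety is that Definition~\ref{d:degree} and Lemma~\ref{l:degree} are stated for $n$-dimensional $K$; there $K\setminus e$ is a subcomplex and $K/(K\setminus e)\cong S^n$, so the boundary subgroup $B_n^{CW}$ is zero and the paper's implication is automatic. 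For general $K$ neither of these is true, and an extra step is genuinely needed.)

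Your surgery step is exactly the missing step. Performing $-\beta_k$-fold connect-sum with the null-homotopic attaching maps $\phi_k$ on disjoint small balls preserves the homotopy class of $f$ (for $n>2$ the $\pi_1(K)$-action on $\pi_n(K)$ is by group automorphisms, so $0$ has trivial orbit and free null-homotopy gives based null-homotopy, as your ``path reconciling basepoints'' remark anticipates), while it shifts the cellular chain $f_\#[M]$ by exactly $-\partial\bigl(\sum_k\beta_k[c_k]\bigr)=-\sum_j\alpha_j[e_j]$, killing all degrees. The modified map still has compact image, so it meets only finitely many $n$-cells, and Lemma~\ref{l:degree} applies to push it into $K^{(n-1)}$. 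In short, you did not merely expand the paper's one-liner; you inserted a necessary reduction (from ``boundary'' to ``zero'') that the paper silently assumes.
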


\begin{proof} The claim follows from \lemref{l:degree} because in this case $\deg_ef=0$ for all $n$-cells $e$.
\end{proof}

\m Given a commutative ring $R$, we recall that the $R$-cup-length
$\cl_R(\alpha)$ of an element $\alpha\in H^*(X;R)$ is the length of
the longest presentation of $\alpha$ as the product of elements of
positive dimensions. In particular, we can speak on the rational
cup-length $\cl_\Q(\ga)$.

\begin{cor}\label{c:cuplength}
Let~$f: M^n\to K, n>2$ be a~$\pi$-surjective map of a closed orientable
manifold~$M$ to a CW space~$K$ with torsion free~$n$-dimensional
homology. Assume that $\cl_{\Q}M<k$, while the $\Q$-vector
space~$H^n(K;\Q)$ is generated by elements of rational
cup-length~$\ge k$. Then~$f$ can be deformed into~$K^{n-1}$.
\end{cor}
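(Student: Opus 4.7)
The goal is to reduce to \lemref{l:skeleton}, so the plan is to verify that $f_*:H_n(M)\to H_n(K)$ is the zero homomorphism under the stated hypotheses, and then quote that lemma.

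First I would pass to rational cohomology. Since $H_n(K)$ is torsion free, the canonical map $H_n(K)\to H_n(K;\Q)$ is injective, and $H_n(M)\cong\Z$ injects into $H_n(M;\Q)$ because $M$ is closed orientable. Hence it suffices to show that $f_*:H_n(M;\Q)\to H_n(K;\Q)$ is zero. By the Universal Coefficient Theorem over the field $\Q$, we have natural isomorphisms $H^n(-;\Q)\cong\Hom_\Q(H_n(-;\Q),\Q)$, so this in turn is equivalent to showing that $f^*:H^n(K;\Q)\to H^n(M;\Q)$ is the zero map.

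Next I would exploit the cup-length hypotheses. Since $H^n(K;\Q)$ is generated, as a $\Q$-vector space, by classes of rational cup-length $\ge k$, it is enough to check that $f^*(\alpha)=0$ for every such generator $\alpha$. Writing $\alpha=\alpha_1\cup\cdots\cup\alpha_m$ with $m\ge k$ and $\deg\alpha_i\ge 1$, naturality of the cup product gives
\begin{equation*}
f^*(\alpha)=f^*(\alpha_1)\cup\cdots\cup f^*(\alpha_m),
\end{equation*}
a product of $m\ge k$ positive-degree rational cohomology classes on $M$. But the hypothesis $\cl_{\Q}M<k$ forces every such product to vanish in $H^*(M;\Q)$, so $f^*(\alpha)=0$.

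Combining the two steps gives $f_*:H_n(M)\to H_n(K)$ equal to zero, and since $f$ is $\pi$-surjective with $n>2$, \lemref{l:skeleton} applies and deforms $f$ into $K^{n-1}$. There is no real obstacle here beyond making the duality step honest; the torsion-free assumption on $H_n(K)$ is used precisely to pass between integral and rational coefficients, which is why it appears in the hypotheses.
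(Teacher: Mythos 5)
Your proof is correct and follows essentially the same route as the paper: use the cup-length hypotheses plus naturality of cup product to show $f^*:H^n(K;\Q)\to H^n(M;\Q)$ vanishes, dualize via the Universal Coefficient Theorem to get $f_*=0$ rationally, use torsion-freeness of $H_n(K)$ to pass to integral coefficients, and then invoke \lemref{l:skeleton}. (One small remark: only the injectivity of $H_n(K)\to H_n(K;\Q)$ is actually needed to conclude $f_*:H_n(M)\to H_n(K)$ is zero from the rational statement; the observation that $H_n(M)\cong\Z$ injects into $H_n(M;\Q)$ is superfluous.)
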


\begin{proof} Since~$H^n(M;\Q)$ does not have elements of cup-length~$\ge k$, we
conclude that~$f^*: H^n(K;\Q)\to  H^n(M;\Q)$ is the zero map. Hence, by the
Universal Coefficient Theorem,~$f_*: H_n(M;\Q)\to H_n(K;\Q)$ is the zero map. Thus,~$f_*:
H_n(M)\to H_n(K)$ is the zero map since~$H_n(K)$ is torsion free.
\end{proof}

\m Recall that the {\em twisted cup-length} $\widetilde{\cl}X$ of a
space~$X$ is the maximal~$k$ such there exists a non-trivial product
$$u_1\cup \cdots \cup u_k\in H^*(X; A_1\otimes \cdots \otimes A_k)$$
where each~$A_k$ is a~$\Z\pi_1(X)$-module,~$u_i\in H^*(X;A_i)$
and~$\dim u_i>0$.

\begin{prop}\label{p:torus}
Let~$f:K^r\to T^{(r)}$ be a map of an~$r$-dimensional CW space to
the~$r$-skeleton of the~$q$-torus~$T=T^q$,~$q\ge r>2$. Suppose that
$\widetilde{\cl}K<r$. Then there exists a map $g: K^r \to T^{(r-1)}$ such that $g|_{K^{(r-2)}}=f|_{K^{(r-2)}}$.
\end{prop}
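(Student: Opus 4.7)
The plan is to push $f$ off each $r$-cell of $T$ one at a time via obstruction theory, and then to adjust the result so that it agrees with $f$ on $K^{(r-2)}$. After cellularly approximating $f$, note that for every $r$-element subset $I = \{i_1 < \cdots < i_r\} \subset \{1,\ldots,q\}$ the class $x_I = x_{i_1} \cup \cdots \cup x_{i_r} \in H^r(T;\Z)$ is an $r$-fold cup product of one-dimensional classes, so the hypothesis $\widetilde{\cl} K < r$ (which implies $\cl_\Z K < r$) forces $f^*(x_I) = 0$ in $H^r(K;\Z)$ for every $I$.

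Enumerate the open $r$-cells of $T$ as $e_{I_1}, \ldots, e_{I_N}$ and set $X_j := T^{(r)} \setminus (e_{I_1} \cup \cdots \cup e_{I_j})$. I will construct inductively maps $f_j : K \to X_j$ with $f_j \simeq f$ through $T^{(r)}$. Given $f_{j-1}$, choose an interior point $p \in e_{I_j}$; the inclusion $X_{j-1} \setminus \{p\} \hookrightarrow X_{j-1}$ is $(r-1)$-connected with $\pi_r(X_{j-1},\, X_{j-1}\setminus\{p\}) \cong \Z$, and the local coefficient system is untwisted because the $r$-cells of $T$ are consistently oriented. Classical obstruction theory then identifies the primary obstruction to deforming $f_{j-1}$ into $X_{j-1}\setminus\{p\}$ as the class in $H^r(K;\Z)$ whose cellular representative records, on each $r$-cell $\sigma$ of $K$, the local degree of $f_{j-1}|_\sigma$ at $p$; this is exactly the pullback of the class in $H^r(X_{j-1};\Z)$ dual to $e_{I_j}$, which in turn is the restriction of $x_{I_j}$. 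Since $f_{j-1} \simeq f$ inside $T^{(r)}$, this obstruction equals $f^*(x_{I_j}) = 0$, so $f_{j-1}$ deforms inside $X_{j-1}$ to a map avoiding $p$; composing with the strong deformation retract $X_{j-1}\setminus\{p\} \to X_j$ yields $f_j$. After $N$ steps, $f_N$ maps $K$ into $T^{(r-1)}$.

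For the final adjustment, after one further cellular approximation inside $T^{(r-1)}$ I may assume $f_N|_{K^{(r-2)}}$ lands in $T^{(r-2)}$. The restriction to $K^{(r-2)} \times I$ of the homotopy $f \simeq f_N$ has domain of dimension $r-1$, so cellular approximation rel endpoints produces a homotopy $h : K^{(r-2)} \times I \to T^{(r-1)}$ from $f_N|_{K^{(r-2)}}$ to $f|_{K^{(r-2)}}$. Applying the homotopy extension property for the cofibration $K^{(r-2)} \hookrightarrow K$ to $h$ and $f_N$ yields $\wt h : K \times I \to T^{(r-1)}$ with $\wt h_0 = f_N$ and $\wt h_1|_{K^{(r-2)}} = f|_{K^{(r-2)}}$, and $g := \wt h_1$ is the desired map. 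The main obstacle is the obstruction-theoretic step: verifying that the local coefficient system is trivial and that the primary obstruction to pushing $f_{j-1}$ off $e_{I_j}$ is represented cellularly by $f^*(x_{I_j})$. Once this identification is in place, the iteration and the final homotopy-extension cleanup are routine.
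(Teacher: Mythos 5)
The obstruction-theoretic step has a genuine gap: the claim that $\pi_r(X_{j-1},\,X_{j-1}\setminus\{p\})\cong\Z$ with trivial $\pi_1$-action is false. Since $r>2$, deleting open $r$-cells does not change the fundamental group, so $\pi_1(X_{j-1})=\pi_1(X_{j-1}\setminus\{p\})=\pi_1(T^{(r)})=\Z^q$. The pair $(X_{j-1},\,X_{j-1}\setminus\{p\})$ deformation retracts to $(X_{j-1},X_j)$, and passing to universal covers, the single $r$-cell $e_{I_j}$ lifts to a $\Z^q$-indexed family of $r$-cells permuted freely by the deck group. The relative Hurewicz theorem then gives
\[
\pi_r(X_{j-1},\,X_{j-1}\setminus\{p\})\;\cong\;H_r\bigl(\widetilde{X}_{j-1},\widetilde{X}_j\bigr)\;\cong\;\Z[\Z^q]
\]
as a $\Z[\Z^q]$-module. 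Consistent orientability of the cells controls signs, not the size of the coefficient group or the action; the module is of infinite rank and $\pi_1$ genuinely permutes its summands. Consequently the primary obstruction to pushing $f_{j-1}$ off $e_{I_j}$ lives in the twisted cohomology $H^r(K;\Z[\Z^q])$ with $\pi_1(K)$ acting through $f_*$, not in $H^r(K;\Z)$. Your class $f^*(x_{I_j})$ is merely the image of the true obstruction under the augmentation $\Z[\Z^q]\to\Z$, and its vanishing does not force the full twisted obstruction to vanish. Note also that your argument only invokes $\cl_\Z K<r$; the proposition is stated with the stronger twisted hypothesis $\widetilde{\cl}\,K<r$ precisely because the obstruction is twisted.

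The paper handles this directly. It works with the single obstruction class $\kappa\in H^r\bigl(T;\{\pi_{r-1}(F)\}\bigr)$ to retracting $T$ onto $T^{(r-1)}$, and invokes the universality of the Berstein class $b_\pi\in H^1(\pi;I(\pi))$ for $\pi=\Z^q$ to express $\kappa$ as the image of $(b_\pi)^r\in H^r(\pi;I(\pi)^{\otimes r})$ under a coefficient homomorphism $I(\pi)^{\otimes r}\to\pi_{r-1}(F)$. Since $(b_\pi)^r$ is an $r$-fold product of positive-degree twisted classes, $\widetilde{\cl}\,K<r$ forces $f^*(b_\pi)^r=0$ and hence $f^*(\kappa)=0$. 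Repairing your cell-by-cell iteration would require expressing each twisted step obstruction as an $r$-fold twisted cup product, which essentially forces the Berstein-class argument anyway. Your final adjustment, using cellular approximation on $K^{(r-2)}\times I$ and the homotopy extension property for $K^{(r-2)}\hookrightarrow K$ to arrange $g|_{K^{(r-2)}}=f|_{K^{(r-2)}}$, is correct and routine.
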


\begin{proof}
Let~$\kappa\in H^r(T;\{\pi_{r-1}(F)\})$ be the first obstruction for
retraction of~$T$ onto~$T^{(r-1)}$, i.e. the first obstruction to a
section of the fibration~$T^{(r-1)}\to T$ with the homotopy fiber
$F$. It suffices to show that~$f^*(\kappa)=0$. Put $\pi=\Z^q$ and
let $b_{\pi}\in H^1(\pi; I(\pi))$ be the Berstein class of
$\pi$,~\cite{Ber,DR}; here $I(\pi)$ is the augmentation ideal of
$\pi$. In view of universality of the Berstein class~$b_{\pi}$, we
see that $\kappa$ is the image of~$(b_{\pi})^r\in
H^r(\pi;I(\pi)^{\otimes r})$ under the cohomology coefficients
homomorphism defined by some homomorphism $I(\pi)^{\otimes
r}\to\pi_{r-1}(F)$,~\cite[Corollary 3.5]{DR},~\cite[Proposition
34]{Sv}. Then~$f^*(\kappa)$ is the image of~$f^*(b_{\pi})^r$ under
the same coefficients homomorphism. Because of the cup-length
restriction, we conclude~$f^*(b_{\pi})^r=0$.
\end{proof}

\section{Some preliminary results}\label{s:lemmata}

\begin{defin}\label{d:range}
Given an  $m$-tuple~$\ov k=(k_1,\dots,k_m)$, $k_1\le \ldots \le k_m$, consider the map
$\phi:\{1,\dots,m\}\to\N$,~$\phi(i)=k_i$. We define the {\em range}
of the~$m$-tuple~$\ov k$ as the ordered image~$r_1<\dots<r_l$
of~$\phi$.
\end{defin}

This definition allows us to avoid working with repeated~$k_i$.

\m We need the following result on incompressibility, cf.~\cite{Bab1, KR1}

\begin{lem}\label{l:compr}
Let~$M^n$ be a closed smooth manifold, let~$k_1+\cdots +k_m$ be a
stable categorical partition of~$n$, and let~$r_1< \cdots <r_l$ be
its range. Suppose that~$H_{r_i}(M;\Q)\ne 0$ for all~$i$. Let $f: M
\to L$ be a map to a polyhedron~$L$ such that~$f_*:H_{r_i}(M;\Q)\to
H_{r_i}(L;\Q)$ is injective for all~$i$. Then~$f$ cannot be deformed
into $L^{(n-1)}$.
\end{lem}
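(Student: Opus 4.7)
The plan is to argue by contradiction via a Babenko-type pullback-metric construction, as in \cite{Bab1, KR1}. I would suppose $f$ is homotopic to some $g \colon M \to L^{(n-1)}$ and, after a PL approximation, take $g$ Lipschitz. Equipping $L^{(n-1)}$ with a piecewise-smooth Riemannian metric $\gm_L$ and fixing an auxiliary Riemannian metric $\gm_0$ on $M$, I would form the one-parameter family of Riemannian metrics
\[
\gm_\epsilon \;=\; g^*\gm_L + \epsilon\,\gm_0, \qquad \epsilon>0,
\]
on $M$. This is the standard degeneration: as $\epsilon\downarrow 0$, the $n$-volume collapses while the relevant stable systoles remain bounded below.

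I would then carry out the two estimates. For the volume: since $g$ factors through the $(n-1)$-dimensional polyhedron $L^{(n-1)}$, the semidefinite form $g^*\gm_L$ has pointwise rank $\leq n-1$, so the Gram determinant of $\gm_\epsilon$ picks up at least one factor of $\epsilon$ at almost every point, giving $\vol_n(M,\gm_\epsilon) = O(\epsilon^{1/2}) \to 0$. For the systoles: fix $i$ and an integral class $\alpha\in H_{r_i}(M;\Z)$ whose rational image $\alpha_\Q$ is nonzero. Injectivity of $f_* = g_*$ on $\Q$-homology (and hence on $\R$-homology) forces $g_*(\alpha_\R) \neq 0$ in $H_{r_i}(L^{(n-1)};\R)$. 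The pointwise inequality $\gm_\epsilon \geq g^*\gm_L$ yields $\vol_k(\sigma,\gm_\epsilon) \geq \vol_k(g\sigma,\gm_L)$ for each singular simplex $\sigma$ in $M$; summing over real cycles representing $\alpha_\R$ and taking the infimum gives
\[
\|\alpha_\R\|_{\gm_\epsilon} \;\geq\; \|g_*(\alpha_\R)\|_{\gm_L} \;\geq\; c_i \;>\; 0,
\]
where $c_i$ is the positive minimum of $\|\cdot\|_{\gm_L}$ on the nonzero elements of the finitely generated subgroup $g_*(H_{r_i}(M;\Z))_\R \subset H_{r_i}(L^{(n-1)};\R)$. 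Hence $\stsys_{r_i}(M,\gm_\epsilon) \geq c_i$ uniformly in $\epsilon$.

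To close the argument I would invoke the hypothesis that $k_1+\cdots+k_m$ is a stable categorical partition of $n$: since every $k_s$ equals some $r_j$, this gives
\[
\prod_{j=1}^{l} c_j^{a_j} \;\leq\; \prod_{s=1}^{m}\stsys_{k_s}(\gm_\epsilon) \;\leq\; C(M)\,\vol_n(\gm_\epsilon),
\]
where $a_j$ is the multiplicity of $r_j$ in $(k_1,\ldots,k_m)$. The left side is a fixed positive constant while the right side tends to $0$, a contradiction.

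The main technical obstacle is making the pullback construction rigorous: since $L$ is only a polyhedron and not a smooth manifold, both ``$g^*\gm_L$'' and the stable norm on $L^{(n-1)}$ require a careful interpretation. The standard remedies, as in \cite{Bab1,KR1}, are either to work throughout with piecewise-smooth Riemannian structures (and piecewise-smooth approximations of $g$), or to thicken $L^{(n-1)}$ to a smooth $n$-manifold carrying a genuine Riemannian metric; either approach legitimizes the pointwise volume bound and the positivity $c_i>0$, the latter resting on the fact that $g_*(H_{r_i}(M;\Z))_\R$ is a finitely generated subgroup of a finite-dimensional real vector space carrying a genuine norm.
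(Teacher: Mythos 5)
Your proof is correct and takes essentially the same approach as the paper. The paper implements the same degeneration via the graph embedding $m\mapsto(m,g(m))\in\R^a\times\R^b$ and rescales the $\R^b$-factor by $t\to\infty$, whereas you write $\gm_\epsilon=g^*\gm_L+\epsilon\,\gm_0$ with $\epsilon\to 0$; these metric families coincide up to a global conformal rescaling (set $\epsilon=t^{-2}$), and both proofs get the contradiction from the stable categorical inequality in exactly the same way (systoles bounded below by injectivity on rational homology, volume collapsing because $g$ factors through an $(n-1)$-dimensional polyhedron).
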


\begin{proof} Suppose the contrary, i.e. that~$f$ is homotopic to a simplicial
map~$g:M \to L$ with~$g(M)\subset L^{(n-1)}$. We consider simplicial
embeddings $M\to \R^a$ and~$L\to R^b$ and define the embedding
$$
i:M\to M\times L\to \R^a\times \R^b,
$$
where the first map has the form~$m\mapsto (m, g(m))$. The standard
metric on~$\R^{a+b}$ yields a PL metric~$\gm$ on~$M=i(M)$, while the
metric on~$\R^b$ yields a metric~$\gm'$ on~$L$. Note that the
map~$g: (M,\gm) \to (L,\gm')$ does not increase the distance. So,
since~$g_*:H_{r_i}(M;\Q)\to H_{r_i}(L;\Q)$ is injective, we conclude
that~$\stsys_{k_i}(M,\gm)\ge\stsys_{k_i}(L,\gm')>0$. Now we rescale
the metric on $\R^b$ via multiplication by~$t\in \R$. This gives us
a new metric on~$\R^{a+b}$ and also new metrics~$\gm_t,\gm'_t$
on~$M, L$ respectively. Clearly, $\stsys_k(L,
\gm'_t)=t^k\stsys_k(L,\gm')$. Hence,
\[
\stsys_k(M,\gm_t)\ge \stsys_k(L, \gm't)=t^k\stsys_k(L,\gm).
\]
In other words,~$\ds\prod_{i=1}^m\stsys_{k_i}(M,\gm_t)$ grows at
least as fast as $t^n$.

\m On the other hand, if an~$n$-dimensional simplex~$\Delta$ in
$\R^{a+b}$ projects into~$(n-1)$-dimensional simplex in~$\R^b$,
then~$\vol(\Delta, \gm_t)$ grows at most as $t^{n-1}$.
Hence,~$\vol(M,\gm_t)$ grows at most as~$t^{n-1}$. Thus, for
any~$C$, the inequality
\[
\prod_{i=1}^m\stsys_{k_1}(M,\gm_t)\le C\vol(M,\gm_t)
\]
is violated for $t$ large enough. Thus,~$k_1+\cdots +k_m$ is not a stable
categorical partition.
\end{proof}

Let $r_1<\ldots <r_l$ be the range of an $m$-tuple $(k_1, \cdots, k_m)$.
Put~$L_i=S^{r_i}$ for~$r_i$ odd. For~$r_i$ even let~$L_i$ be a CW
complex that is homotopy equivalent to~$\Omega S^{r_i+1}$ and has exactly
one cell in each dimension~$sr_i, s=0, 1, \ldots$ and no other
cells. The existence of such cellular structure on the homotopy type~$\Omega S^{r_i+1}$ follows from the Morse theory~\cite{Mi}, or one can use the James construction~$JS^{r_i}$ \cite{Ha}. Note also that each $r$-cell of~$L_i$ yields a cycle in~$C_r(L_i)$ since its
boundary belongs to the skeleton of dimension $<r-2$. Thus, the
following fact holds true.

\begin{prop}\label{p:torsion}
For each~$k$ we have~$H_k(L_i)=C_k(L_i)$. Furthermore, every subcomplex of~$L_i$
has torsion free homology.
\end{prop}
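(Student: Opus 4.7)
The plan is to observe that the cells of $L_i$ are so sparsely distributed in dimension that the cellular boundary operators are forced to vanish identically; once this is known, both assertions of the proposition follow at once. This also formalizes the remark made just before the statement.

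First I would treat the even case, where $L_i$ is the specified CW-model of $\Omega S^{r_i+1}$ with one cell in each dimension of the form $sr_i$ ($s=0,1,2,\ldots$) and no other cells. Since $r_i$ is even, hence $\ge 2$, whenever $C_k(L_i)\ne 0$ (that is, $k=sr_i$) the integer $k-1$ is not divisible by $r_i$, so $C_{k-1}(L_i)=0$. Therefore every cellular boundary $\partial\colon C_k(L_i)\to C_{k-1}(L_i)$ is the zero map, and hence
\[
H_k(L_i)=\Ker\partial/\Im\partial=C_k(L_i)
\]
for every $k$. The odd case $L_i=S^{r_i}$ is handled by the same reasoning when $r_i\ge 3$; for $r_i=1$ the boundary of the unique $1$-cell of $S^1$ is $p-p=0$, where $p$ is the unique $0$-cell, so all cellular differentials still vanish and the same conclusion $H_k(L_i)=C_k(L_i)$ holds.

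For the second claim, let $L'\subset L_i$ be any CW-subcomplex. Then $C_*(L')$ is a subcomplex of $C_*(L_i)$ whose boundary is the restriction of the (vanishing) boundary of $C_*(L_i)$, and is therefore itself identically zero. Consequently $H_k(L')=C_k(L')$ is a free abelian group (of rank $0$ or $1$ in the even case), and in particular $L'$ has torsion free homology. The argument is essentially combinatorial, and the only mild point to check is the edge case $r_i=1$ in the odd sphere situation; apart from that there is no real obstacle.
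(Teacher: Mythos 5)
Your proposal is correct and follows essentially the same reasoning the paper uses, namely the one-sentence observation preceding the statement that each cell of $L_i$ has boundary landing in a skeleton with no cells one dimension down, so all cellular differentials vanish and $H_k=C_k$ for $L_i$ and any of its subcomplexes. You spell this out cleanly, including the only edge case $r_i=1$ (where $L_i=S^1$) that the paper's remark glosses over.
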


\begin{lem}\label{l:mono}
Let~$r_1<\cdots <r_l$ be the range of an $m$-tuple~$(k_1, \ldots,
k_m)$. Given a finite CW space~$X$, let~$b_i$ denote the~$r_i$-th
Betti number~$b_{r_i}(X)$. Then for each $i$ there exist
a~$\pi$-surjective map~$f_i:X\to (L_i)^{b_i}$ that induces an
isomorphisms $f_*:H_{r_i}(X;\Q)\to H_{r_i}((L_i)^{b_i};\Q)$. In
particular, the map $f=(\prod f_i)\circ \Delta$ is $\pi$-surjective
and induces a monomorphism
$$
f_*:H_{r_i}(X;\Q)\to
H_{r_i}\left(\prod_{i=1}^{l}(L_i)^{b_i};\Q\right)
$$
for all~$i$ where $\Delta: X \to X^l$ is the diagonal.
\end{lem}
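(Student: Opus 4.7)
The plan is to construct each $f_i:X\to (L_i)^{b_i}$ separately, exploiting the bottom-cell structure of $L_i$, and then to assemble them through the diagonal $\Delta$ and verify that the product map has the claimed properties via K\"unneth. I first dispose of the case $r_i=1$: here $L_1=S^1=K(\Z,1)$, and maps into $(S^1)^{b_1}$ correspond bijectively to $b_1$-tuples of integral $1$-classes. Choose $\alpha_1,\dots,\alpha_{b_1}$ dual to a splitting $H_1(X;\Z)\cong\Z^{b_1}\oplus\Tors$; the classifying map $f_1$ is then $\pi$-surjective (the induced homomorphism $H_1(X;\Z)\to\Z^{b_1}$ being the projection onto the free summand) and it is a rational $H_1$-isomorphism by construction.

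For $r_i\ge 2$, $L_i$ is simply connected and $(r_i-1)$-connected with $\pi_{r_i}(L_i)=\Z$, and the explicit computations $H^*(S^{r_i};\Q)=\Lambda(x_{r_i})$ for odd $r_i$ and $H^*(\Omega S^{r_i+1};\Q)=\Q[x_{r_i}]$ for even $r_i$ match $H^*(K(\Z,r_i);\Q)$. Consequently the canonical map $L_i\to K(\Z,r_i)$ classifying $\iota\in H^{r_i}(L_i;\Z)$ is a rational equivalence, and its homotopy fibre $F_i$ is simply connected with purely torsion homotopy groups. By standard obstruction theory applied to lifts $X\to K(\Z,r_i)$ through $L_i$, for each class $\alpha\in H^{r_i}(X;\Z)$ some integer multiple $N\alpha$ is realized by an honest map $X\to L_i$, because the finitely many relevant obstruction groups $H^{k+1}(X;\pi_k(F_i))$, $k>r_i$, are finite. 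Applying this to a basis $\alpha_1,\dots,\alpha_{b_i}$ of $H^{r_i}(X;\Z)/\Tors$ and assembling coordinatewise, I obtain $f_i:X\to (L_i)^{b_i}$ whose pullback on $H^{r_i}(-;\Q)$ sends the canonical generators to nonzero rational multiples of the $\alpha_j$, hence is an isomorphism, and dually so is the map on $H_{r_i}(-;\Q)$; $\pi$-surjectivity is automatic since the target is simply connected.

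For the composite $f=(\prod_i f_i)\circ\Delta$, $\pi$-surjectivity follows at once: $\pi_1$ of the product is trivial unless $r_1=1$, in which case it equals $\Z^{b_1}$ and projection onto the first factor identifies $f_*$ on $\pi_1$ with the surjective $f_{1*}$. For the monomorphism, the K\"unneth formula gives
\[
H_{r_i}\Bigl(\prod_{j=1}^l (L_j)^{b_j};\Q\Bigr)=\bigoplus_{s_1+\cdots+s_l=r_i}\bigotimes_{j=1}^l H_{s_j}((L_j)^{b_j};\Q),
\]
and the standard decomposition of $\Delta_*(x)$ for $x\in H_{r_i}(X;\Q)$ places the contribution $1\otimes\cdots\otimes f_{i*}(x)\otimes\cdots\otimes 1$ in the summand indexed by $(0,\dots,0,r_i,0,\dots,0)$ (with $r_i$ in position $i$), while all other components of $f_*(x)$ lie in distinct summands. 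Vanishing of $f_*(x)$ therefore forces $f_{i*}(x)=0$, and the $H_{r_i}$-isomorphism property of $f_{i*}$ then yields $x=0$.

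The principal obstacle is the realization statement in the $r_i\ge 2$ case: producing honest maps $X\to L_i$ with a prescribed integral pullback of $\iota$ up to a positive integer scalar. This depends crucially on $L_i$ being a rational Eilenberg--Mac\,Lane space and on the attendant finiteness of the obstruction groups that control the lift from $K(\Z,r_i)$.
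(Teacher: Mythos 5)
Your proof follows the same approach as the paper's: the central observation is that $L_i$ is rationally a $K(\Q,r_i)$ by Serre's finiteness theorem, from which the existence of maps $f_i\colon X\to (L_i)^{b_i}$ inducing rational $H_{r_i}$-isomorphisms is deduced, and $\pi$-surjectivity of the assembled diagonal map is read off from the (near-)simple-connectivity of the $L_i$. You fill in details the paper leaves implicit — the obstruction-theoretic realization of integer multiples of integral classes by maps to $L_i$ through the Postnikov tower with finite obstruction groups, the separate treatment of $r_1=1$ via classifying maps to the torus, and the K\"unneth argument showing $f_*$ is injective on each $H_{r_i}(-;\Q)$ — but the strategy is identical.
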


\begin{proof} Note that~$\pi_k(L_i)\otimes \Q=0$ for~$k\ne r_i$~\cite{Serre}. So, the
rationalization of~$L_i$ is the Eilenberg--MacLane space~$K(\Q,r_i)$. Hence, there exists a map~$f_i: X \to (L_i)^{b_i}$ such that~$\gf_*: H_{r_i}(X;\Q) \to H_{r_i}((L_i)^{b_i};\Q)$ is an isomorphism. This map is also $\pi$-surjective, since $L_i$ is simply connected for $r_i>1$ and is a circle otherwise.
\end{proof}

\begin{lem}\label{l:obstruction}
Let~$r_1<\cdots <r_l$ be the range of an $m$-tuple~$(k_1, \ldots,
k_m)$. Let $L=\prod_{i=1}^n(L_i)^{a_i}$, $a_i\in \N$, be equipped with the product
CW complex structure and let~$M^n$ be a closed
orientable~$n$-manifold with $\cl_{\Q}M^n<m$. Suppose that~$\sum
\gl_i\ge m$ for every~$l$-tuple~$(\gl_1,\dots, \gl_l)$ of non-negative
integers with~$\sum \gl_ir_i=n$  and~$\gl_i\le a_i$ for~$r_i$ odd. Then
every~$\pi$-surjective map~$f:M \to L$ is homotopic to a map to
the~$(n-1)$-skeleton of~$L$.
\end{lem}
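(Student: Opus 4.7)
The plan is to reduce the lemma to Corollary~\ref{c:cuplength} applied with $K = L$ and $k = m$. The map $f$ is $\pi$-surjective and $M$ is a closed orientable $n$-manifold with $\cl_{\Q}M < m$, so the only hypotheses that require work are: (i) the $n$-dimensional homology of $L$ is torsion-free, and (ii) $H^n(L;\Q)$ is generated (as a $\Q$-vector space) by elements of rational cup-length $\ge m$. Item (i) is immediate: by Proposition~\ref{p:torsion} each $L_i$ has torsion-free homology, so the cellular chain complex of the product $L = \prod_{i=1}^{l}(L_i)^{a_i}$ is, up to shift, a tensor product of finitely generated free abelian chain complexes, and Künneth gives torsion-free homology for $L$ in every degree.

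The heart of the argument is item (ii). For $r_i$ odd we have $L_i = S^{r_i}$, so $H^*(L_i;\Q) = \Lambda_{\Q}[y_i]$ with $|y_i| = r_i$; for $r_i$ even we have $L_i \simeq \Omega S^{r_i+1}$, which is rationally a $K(\Q, r_i)$, so $H^*(L_i;\Q) = \Q[y_i]$ is polynomial on a class of degree $r_i$. By Künneth,
\[
H^*(L;\Q) \cong \bigotimes_{i=1}^{l} \bigotimes_{j=1}^{a_i} H^*(L_i;\Q),
\]
so a $\Q$-basis of $H^n(L;\Q)$ is given by monomials $\prod_{i,j} y_{i,j}^{e_{i,j}}$ with $\sum_{i,j} e_{i,j} r_i = n$ and $e_{i,j} \in \{0,1\}$ whenever $r_i$ is odd. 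The cup-length of such a monomial is $\sum_{i,j} e_{i,j}$. Setting $\lambda_i := \sum_{j=1}^{a_i} e_{i,j}$, we obtain non-negative integers with $\sum_i \lambda_i r_i = n$ and $\lambda_i \le a_i$ when $r_i$ is odd (since each $e_{i,j}$ is $0$ or $1$). The standing hypothesis of the lemma then gives $\sum_i \lambda_i \ge m$, so the cup-length of every such basis monomial is at least $m$, proving (ii).

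With (i) and (ii) in hand, Corollary~\ref{c:cuplength} applies and $f$ deforms into $L^{(n-1)}$. The only place where care is needed is the odd-degree bookkeeping in the cup-length estimate: one must keep the factors with $r_i$ odd separate (they contribute at most $a_i$ to $\lambda_i$, which is exactly the restriction built into the hypothesis), while the even factors are unrestricted because the polynomial algebras $\Q[y_i]$ admit arbitrary powers. This is the only subtle point; everything else is a direct verification of the hypotheses of Corollary~\ref{c:cuplength}.
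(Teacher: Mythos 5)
Your proof is correct and follows essentially the same route as the paper: reduce to Corollary~\ref{c:cuplength}, observe that $H^*(L;\Q)$ is free skew-commutative so that $H^n(L;\Q)$ is spanned by monomials, and use the numerical hypothesis (with the odd-degree bookkeeping $\lambda_i \le a_i$) to show each monomial has cup-length $\ge m$. Your explicit K\"unneth verification that $H_n(L)$ is torsion-free is in fact slightly more careful than the paper's bare invocation of Proposition~\ref{p:torsion}, which only asserts torsion-freeness for each $L_i$ and its subcomplexes, not for the product.
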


\begin{proof} Since~$H^*(L;\Q)$ is a free
skew-commutative algebra, we conclude that each element
of~$H^n(L;\Q)$ is a sum of monomials~$x_1\cdots x_l$ where~$x_i\in
H^{\gl_ir_i}((L_i)^{a_i};\Q)$, with~$\sum_{i=1}^l \gl_ir_i=n$, $\gl_i\in \Z_+$. Each
monomial~$x_1\cdots x_l$ can be decomposed as the product of at
least~$\sum \gl_i\ge m$ factors, i.e. it has the cup-length~$\ge m$.
Now the result follows from \corref{c:cuplength} in view of
\propref{p:torsion}.
\end{proof}

\begin{cor}\label{c:s}
Let $M, L$ and $(k_1, \ldots, k_m)$ be as in Lemma $\ref{l:obstruction}$. Suppose that~$\sum
s_j\ge m$ for every~$m$-tuple~$(s_1,\dots, s_m)$ of non-negative
integers with~$\sum s_jk_j=n$  and~$s_j\le a_j$ for~$r_j$ odd. Then
every~$\pi$-surjective map~$f:M \to L$ is homotopic to a map to
the~$(n-1)$-skeleton of~$L$.
\end{cor}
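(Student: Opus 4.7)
The plan is to reduce Corollary \ref{c:s} to Lemma \ref{l:obstruction} by a purely combinatorial argument. The two statements differ only in that the summability constraint is phrased in terms of the original $m$-tuple $(k_1,\dots,k_m)$ rather than its range $(r_1,\dots,r_l)$, so it suffices to show that the hypothesis of the corollary implies the hypothesis of the lemma; once that is done, Lemma \ref{l:obstruction} applies and furnishes the desired deformation into the $(n-1)$-skeleton.

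To set up the comparison, I would first fix, for each range index $i\in\{1,\dots,l\}$, a distinguished representative $j_0(i)$ among those $j\in\{1,\dots,m\}$ with $k_j=r_i$; such a representative exists by the definition of the range. Then, given any $l$-tuple $(\lambda_1,\dots,\lambda_l)$ of non-negative integers satisfying $\sum_i \lambda_i r_i = n$ and $\lambda_i\le a_i$ for $r_i$ odd, I would associate an $m$-tuple $(s_1,\dots,s_m)$ by concentrating $\lambda_i$ on the single index $j_0(i)$, namely setting $s_{j_0(i)}=\lambda_i$ and $s_j=0$ for the remaining $j$ with $k_j=r_i$.

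A routine verification then yields $\sum_j s_j k_j = \sum_i \lambda_i r_i = n$, and the bound $s_j\le a_j$ for $k_j$ odd is inherited from $\lambda_i\le a_i$ for $r_i$ odd, since a nonzero $s_j$ forces $j=j_0(i)$ for some $i$ with $r_i=k_j$. The key identity is $\sum_j s_j = \sum_i \lambda_i$; applying the hypothesis of the corollary to this $m$-tuple gives $\sum_i\lambda_i = \sum_j s_j \ge m$, which is exactly the hypothesis of Lemma \ref{l:obstruction}.

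There is essentially no geometric or homotopy-theoretic content here beyond what Lemma \ref{l:obstruction} already handles; the only point requiring care is keeping the two indexing schemes aligned, since the corollary implicitly refers to $a_j$ via the range map sending $j$ to the unique $i(j)$ with $k_j=r_{i(j)}$. Once the bookkeeping is in place, the reduction is immediate.
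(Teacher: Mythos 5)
Your proof is correct and takes essentially the same approach as the paper's: both reduce to Lemma~\ref{l:obstruction} by exhibiting, for each admissible $l$-tuple $(\lambda_1,\dots,\lambda_l)$, a corresponding admissible $m$-tuple $(s_1,\dots,s_m)$ with $\lambda_i=\sum_{k_j=r_i}s_j$, so that $\sum_j s_j=\sum_i\lambda_i$ and $\sum_j s_jk_j=\sum_i\lambda_ir_i$. The paper states this correspondence in a single line; you simply make explicit the choice of representatives $j_0(i)$ and the verification of the constraints.
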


\begin{proof} Put $\gl_i=\sum_{k_j=r_i}s_j$, and the result follows
from \lemref{l:obstruction}.
\end{proof}

\begin{defin}\label{d:sub}
Let~$r_1<\cdots <r_l$ be the range of an $m$-tuple~$(k_1, \ldots,
k_m)$. Let~$\{L_{\ga}, \ga\in A\}$ be a finite family of complexes each of which is
homeomorphic to some~$L_i$ and equip $L=\prod L_{\ga}$ with the product
CW complex structure. Put~$r_{\ga}=r_i$ if~$L_{\ga}$ is
homeomorphic to $L_i$. Let $L=\prod L_{\ga}$. Given~$k\in \N$, we
define a subspace~$L_{[k]}$ of~$L$ by setting
\[
L_{[k]}=\bigcup_{\{\gl_{\ga}\}}\left(\prod_A L_{\ga}^{(\gl_{\ga}r_{\ga})}\right)
\]
where~$\{\gl_{\ga}\}$ runs over the families of non-negative integers $\gl_{\ga}$ with
$\ds\sum_{\ga}\gl_{\ga}\le k$.
\end{defin}

\begin{lem}\label{l:trick2}
Let~$r_1<\cdots <r_l$ be the range of a stable categorical $m$-tuple
$(k_1,\ldots,k_m)$ for a closed $n$-manifold~$M$. Put $b_i=b_{r_i}(M)$.
Let~$\{L_{\ga}\}$ be a finite family of
complexes each of which is homeomorphic to some~$L_i, i=1, \ldots,
l$.
Consider a map~$f:M^n\to L_{[m]}$ and compositions
$$
\CD f_i: M^n @>f>> L_{[m]} @>\subset >> L @>\text{\rm proj}>>
(L_i)^{b_i}
 \endCD
 $$
 and assume that ~$(f_i)_*:H_{r_i}(M^n;\Q)\to
H_{r_i}((L_i)^{b_i};\Q)$ are monomorphisms for all~$i$. Then~$f$
cannot be deformed to~$L_{[m-1]}$.
\end{lem}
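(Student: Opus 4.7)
The plan is to argue by contradiction using the metric rescaling technique of \lemref{l:compr}, but with an \emph{anisotropic} rescaling of the factors of $L$ adapted to the exponents $r_i$. Suppose $f$ were homotopic to a map into $L_{[m-1]}$; by simplicial approximation we may take this to be a simplicial map $g: M \to L_{[m-1]}$. Pick simplicial embeddings $M \hookrightarrow \R^a$ and $L_\alpha \hookrightarrow \R^{b_\alpha}$, assemble them to $L = \prod_\alpha L_\alpha \hookrightarrow \R^b$ with $b = \sum_\alpha b_\alpha$, and form the embedding $i: M \to \R^a \times \R^b$, $i(x) = (x, g(x))$. Unlike in \lemref{l:compr}, for $t > 0$ I would rescale the subfactor $\R^{b_\alpha}$ by $\lambda_\alpha = t^{1/r_\alpha}$, obtaining metrics $\gm'_t$ on $L$ and the pullback metric $\gm_t$ on $M$.

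For the systole lower bound, each projection $p_i : L \to (L_i)^{b_i}$ is distance-non-increasing in $\gm'_t$, and inside $(L_i)^{b_i}$ every copy of $L_i$ is rescaled by $t^{1/r_i}$, so $\stsys_{r_i}((L_i)^{b_i},\gm'_t) \ge c_i t$. The assumed $\Q$-homology monomorphism $(f_i)_*$ then gives $\stsys_{r_i}(M,\gm_t) \ge c_i t$, and writing each $k_j$ as some $r_{i(j)}$ and multiplying yields
\[
\prod_{j=1}^{m} \stsys_{k_j}(M,\gm_t) \ge C\, t^{m}.
\]

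For the volume upper bound, each $n$-simplex $\Delta$ of $M$ is mapped by $g$ into a single product cell $\prod_\alpha C_\alpha \subset L_{[m-1]}$ of multi-index $(\mu_\alpha)$ with $\dim C_\alpha = \mu_\alpha r_\alpha$ and $\sum_\alpha \mu_\alpha \le m-1$. A Cauchy--Binet type expansion of $\|v_1 \wedge \cdots \wedge v_n\|_{\gm_t}$ in the product decomposition $\R^a \times \prod_\alpha \R^{b_\alpha}$ bounds the $n$-volume of $\Delta$ by a constant times $\max_{(n_\alpha)} t^{\sum_\alpha n_\alpha/r_\alpha}$, with $(n_\alpha)$ ranging over non-negative integer tuples subject to the rank constraint $n_\alpha \le \mu_\alpha r_\alpha$ and $\sum_\alpha n_\alpha \le n$. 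This rank constraint yields $n_\alpha/r_\alpha \le \mu_\alpha$, so $\sum_\alpha n_\alpha/r_\alpha \le \sum_\alpha \mu_\alpha \le m-1$ and hence $\vol_n(M,\gm_t) \le C' t^{m-1}$.

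Comparing the two estimates, the ratio $\prod_j \stsys_{k_j}(M,\gm_t)/\vol_n(M,\gm_t)$ grows at least like $t$ and is therefore unbounded in $t$, contradicting the hypothesis that $(k_1,\ldots,k_m)$ is stable categorical. The main subtlety lies in the volume estimate: the raw cell dimension $\sum_\alpha \mu_\alpha r_\alpha$ may well exceed $n$, so the mere fact that $g$ lands in a proper subcomplex of $L$ is not by itself useful, and one really needs the per-factor rank bound together with the anisotropic rescaling $\lambda_\alpha = t^{1/r_\alpha}$ to convert the ``level'' inequality $\sum_\alpha \mu_\alpha \le m-1$ into the desired exponent $m-1$ in $t$.
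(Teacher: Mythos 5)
Your argument is essentially the paper's own proof of \lemref{l:trick2}: the same anisotropic rescaling $\lambda_\alpha = t^{1/r_\alpha}$, the same $t^m$ lower bound on $\prod_j \stsys_{k_j}(M,\gm_t)$ obtained from the monomorphism hypothesis together with distance-nonincreasing projections from the graph embedding, and the same $t^{m-1}$ volume upper bound derived from the per-factor rank constraint $\sum_\alpha \mu_\alpha \le m-1$ built into \defref{d:sub}. The only cosmetic differences are that you embed directly into Euclidean space while the paper first replaces each $L_i$ by a homotopy-equivalent simplicial complex $K_i$ and puts an abstract PL Riemannian metric on it (a step you would also need, since the $L_\alpha$ are CW complexes, not simplicial ones, so ``simplicial embeddings $L_\alpha\hookrightarrow\R^{b_\alpha}$'' require this replacement), and that you spell out the Cauchy--Binet expansion where the paper merely asserts the growth rate of the simplex volumes.
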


\begin{proof}  Let $K_i$ be a simplicial complex that is homotopy equivalent to $L_i$
and define $K_{[k]}$ similarly to $L_{[k]}$.  We equip each $K_{i}$
with a PL Riemannian metric~$\gm^i$. Together these metrics yield
the product metric~$\ov\gm$ on~$\prod K_i$ and hence a metric on
$K_{[m]}$. Choose cellular homotopy equivalences $L_i \to K_i$. They
yield a map $\gf: L_{[m]}\to K_{[m]}$, and it is clear that if $f$
can be deformed to $L_{[m-1]}$ then $\gf f$ can be deformed
$K_{[m-1]}$. So, by way of contradiction, suppose that there exists
a PL map~$g: M \to K_{[m-1]}$ that is homotopic to~$\gf f$.
Furthermore, construct $g_i: M \to (K_i)^{b_i}$ in the same way as
we constructed $f_i$ from $f$.

Consider the embedding~$j:M\to M\times K, j(m)=(m, g(m))$, the graph of~$M$. We
take any metric~$\gm^0$ on~$M$; this gives us the product metric on~$M\times K_{[m]}$,
and this product metric induces a metric~$\gm$ on~$M=j(M)$.

Now we rescale the metric~$\gm^i$ on~$(K_i)^{b_i}$ by multiplication
by $t^{1/r_i}, t\in \R_+$ and get the metric~$\gm^i_t$.
 The metrics~$\gm^i_t$ together with the
metric~$\gm^0$ on~$M$ yield a metric~$\gm_t$ on $M=j(M)$.

Note that $\stsys_{r_i}(K_i,\gm^i_t)$ as a function of~$t$ grows
as~$t$.

Since the map~$f_i:(M,\gm_t)\to ((K_i)^{b_i}, \gm^i_t)$ does not
increase the distance, and since~$(f_i)_*$ is a monomorphism in
dimension~$r_i$, we conclude that $\stsys_{r_i}(M,\gm_t)\ge
\stsys_{r_i}(K_i, \gm^i_t)$. Hence, if $k_i=r_j$ then
$\stsys_{k_i}(M,\gm_t)\ge \stsys_{k_i}(K_j, \gm^j_t)$. Thus,
$\ds\prod_{i=1}^m\stsys_{k_i}(M, \gm^t)$ grows at least as $t^m$.

On the other hand, the~$q$-volume~$\vol(\Delta^q,\gm^i_t)$ of
a~$q$-simplex $\Delta$ in~$K_i$ grows as~$t^{q/r_i}$. Hence, for the
product, say, $\Delta^{q_1}\times \cdots \times \Delta^{q_{m-1}}$ in
$K_{[m-1]}$,~$\Delta^{q_i}\subset K_i$, its volume grows
as~$t^{\sum(q_i/k_i)}$. But~$\sum(q_i/k_i)\le m-1$ by
\defref{d:sub}.

\m
Thus, the inequality
\[
\ds\prod_{i=1}^m\stsys_{k_i}(M,\gm_t)\le C\vol(M,\gm^t)
\]
cannot be true for all~$t$, since its left-hand part grows as at least~$t^m$
while its
right-hand part grows as at most~$t^{m-1}$.
\end{proof}

Every cell~$e$ of some~$L_i$ gives a chain in~$C_*(L_i)$, and this chain is a cycle,
cf. \propref{p:torsion}. We denote by~$[e]$ the corresponding homology
class.

\begin{lem}\label{l:kunneth}
Put~$X=L_i, Y=L_j$. Given a commutative ring $R$, let~$c\in H^k(X;R)$ and $c'\in
H^l(Y;R)$. Let~$e$ and~$e'$ be oriented cells of
dimensions~$k$ and~$l$ of~$X$ and~$Y$, respectively. Then
in~$H^*(X\times Y;R)$ we have:
\begin{equation}\label{eq:tensor}
\la( p_1^*(c)\cup p_2^*(c'),[e\times e']\ra =\la c,[ e]\ra \la c',[e']\ra.
\end{equation}
where~$p_1: X \times Y \to X$ and~$p_2: X \times Y \to Y$ are the projections.
\end{lem}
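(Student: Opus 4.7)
The plan is to recognize \eqref{eq:tensor} as the standard compatibility between the cohomology cross product and the homology cross product, then verify that the special cellular structure of $L_i$ and $L_j$ supplied by \propref{p:torsion} makes the product cell $e\times e'$ a clean representative of $[e]\times [e']$.

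First I would rewrite the left-hand side using the fact that, by definition, the external cross product is $c\times c' = p_1^*(c)\cup p_2^*(c')$, so the pairing in question is exactly $\langle c\times c',[e\times e']\rangle$. Next I would identify the homology class $[e\times e']$ with $[e]\times [e']$. Equip $X\times Y=L_i\times L_j$ with the product CW structure; the cellular chain complex is then $C_*(L_i)\otimes C_*(L_j)$ by the Eilenberg--Zilber theorem at the cellular level. By \propref{p:torsion} the cellular boundary maps in $C_*(L_i)$ and $C_*(L_j)$ vanish, so the product cellular differential also vanishes, and every product cell $e\otimes e'$ is automatically a cycle whose homology class is $[e]\otimes [e']$. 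Under the cellular K\"unneth map this is precisely $[e]\times [e']$.

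Having identified both factors as cross products, I would then invoke the standard Kronecker pairing identity
\[
\langle c\times c',\,[e]\times [e']\rangle = \langle c,[e]\rangle\cdot\langle c',[e']\rangle,
\]
which holds on the chain level: dualizing the Eilenberg--Zilber identification, evaluating $c\otimes c'$ against $[e]\otimes [e']$ via the tensor-product pairing returns the product of pairings. This is a standard fact about cross products (cf.\ Hatcher's treatment of Theorem~3.15); with our degree conventions $c\in H^k$, $c'\in H^l$, $[e]\in H_k$, $[e']\in H_l$ no K\"unneth sign intervenes.

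The main --- indeed only --- obstacle is to make sure the cellular K\"unneth step is strictly an identification rather than merely an isomorphism, and that orientations match. The first point is precisely what \propref{p:torsion} buys us: vanishing cellular differentials together with torsion-freeness turn both $H_*(L_i\times L_j)$ and $H^*(L_i\times L_j;R)$ into the (co)chain-level tensor products, with basis the product cells. The orientation point is automatic once we orient $e\times e'$ as the product of the given orientations on $e$ and $e'$. Beyond these bookkeeping matters the lemma is a direct consequence of the definitions.
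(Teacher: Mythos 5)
Your proposal is correct and takes essentially the same route as the paper: both reduce the claim to the compatibility of the cohomology cross product $p_1^*(c)\cup p_2^*(c')=c\times c'$ with the Kronecker pairing on the cellular chain level, exactly the point the paper cites from Hatcher (p.~278). Your only added content is to spell out why $[e\times e']=[e]\times[e']$ via \propref{p:torsion} (vanishing cellular differentials), which the paper leaves implicit but which is indeed the fact that makes the product cell a genuine cycle representing the right class.
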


\begin{proof} See \cite[p. 278]{Ha}, where the equivalence of two definitions of
tensor
 products~$c\otimes c'$ (left-hand and right-hand part of \eqref{eq:tensor},
respectively) is proved.
\end{proof}

\begin{lem}\label{l:cocycle}
Let $R$ be a commutative ring. Let~$(k_1,\ldots ,k_m)$ be
an~$m$-tuple with the range~$r_1<\dots<r_l$. Let~$\{L_{\ga}\}$ be a
finite family of complexes each of which is homeomorphic to
some~$L_i, i=1, \ldots, l$. Consider a $d$-cell

\begin{equation}\label{eq:cell}
e=\prod e^{\gl_{\ga}r_{\ga}}\subset \prod L_{\ga}, \quad \gl_{\ga}\ge 0,\quad
\dim
e=\sum_{\ga}\gl_{\ga}r_{\ga}=d.
\end{equation}

Let~$c\in H^d(\prod L_{\ga};R)$ be such that~$\la c,[e]\ra=1$
and~$\la c,[e']\ra=0$ for all~$d$-cells~$e'$ with~$e\ne e'$.
Then~$\cl_R(c)\ge \sum \gl_{\ga}$.
\end{lem}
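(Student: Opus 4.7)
The strategy is to identify $c$, via the K\"unneth theorem, with the cross-product $\prod_\alpha p_\alpha^*(c_\alpha)$, where each $c_\alpha$ is the cohomology class dual to the single cell $e^{\lambda_\alpha r_\alpha}$ in $L_\alpha$, and then to exhibit an explicit factorization of length $\sum_\alpha \lambda_\alpha$ of $c$ into positive-dimensional classes.

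By \propref{p:torsion}, each $L_\alpha$ has torsion-free cellular homology concentrated in dimensions that are multiples of $r_\alpha$, so the K\"unneth theorem yields a cross-product ring isomorphism $H^*(\prod_\alpha L_\alpha; R) \cong \bigotimes_\alpha H^*(L_\alpha; R)$. Since the cellular differentials on each $L_\alpha$ vanish for dimensional reasons, the cochain dual to $e^{\lambda_\alpha r_\alpha}$ is automatically a cocycle; let $c_\alpha \in H^{\lambda_\alpha r_\alpha}(L_\alpha; R)$ denote its cohomology class. A repeated application of \lemref{l:kunneth} shows that, for every $d$-cell $e' = \prod_\alpha e^{s_\alpha r_\alpha}$ of the product, the pairing $\la \prod_\alpha p_\alpha^*(c_\alpha), [e']\ra$ equals $\prod_\alpha \la c_\alpha, [e^{s_\alpha r_\alpha}]\ra$, which is $1$ when $s_\alpha = \lambda_\alpha$ for all $\alpha$ and vanishes otherwise. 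By the characterizing hypothesis on $c$, we conclude $c = \prod_\alpha p_\alpha^*(c_\alpha)$.

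It now suffices to factor each $c_\alpha$ as a product of $\lambda_\alpha$ positive-dimensional classes. When $r_\alpha$ is odd, $L_\alpha = S^{r_\alpha}$ forces $\lambda_\alpha \in \{0,1\}$ and the presentation is trivial. When $r_\alpha$ is even and $\lambda_\alpha \geq 1$, let $x_\alpha \in H^{r_\alpha}(L_\alpha; R)$ be the class dual to the cell $e^{r_\alpha}$. Using the cup-product structure of the cohomology ring of $L_\alpha \simeq \Omega S^{r_\alpha+1}$ together with \lemref{l:kunneth} applied to an iterated diagonal on $L_\alpha$, one checks that $x_\alpha^{\lambda_\alpha}$ is a unit scalar multiple of $c_\alpha$; the tautological factorization $x_\alpha^{\lambda_\alpha} = x_\alpha\cdot x_\alpha\cdots x_\alpha$ of length $\lambda_\alpha$ then transfers to $c_\alpha$ after absorbing the scalar into a single factor.

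Concatenating these factorizations along the pullbacks $p_\alpha^*$ produces a presentation of $c = \prod_\alpha p_\alpha^*(c_\alpha)$ as a product of $\sum_\alpha \lambda_\alpha$ positive-dimensional cohomology classes, yielding $\cl_R(c) \geq \sum_\alpha \lambda_\alpha$ as required. The main technical obstacle is the identification of $c_\alpha$ with a unit multiple of $x_\alpha^{\lambda_\alpha}$ in the even case: this requires unwinding the (divided-power) cup-product structure of $H^*(\Omega S^{r_\alpha+1}; R)$ and is the only place where the nature of the coefficient ring $R$ actually enters the argument.
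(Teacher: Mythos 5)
Your proof follows essentially the same route as the paper's: decompose $c$ via \lemref{l:kunneth} as $c = p_{\alpha_1}^*(c_{\alpha_1})\cup\cdots\cup p_{\alpha_s}^*(c_{\alpha_s})$, where $c_\alpha$ is the class Kronecker-dual to the single $\lambda_\alpha r_\alpha$-cell of $L_\alpha$, and then read off a length-$\sum\lambda_\alpha$ factorization from the ring structure of each $H^*(L_\alpha;R)$. The paper compresses your final step into the one-line assertion that $H^*(L;R)$ is a free skew-commutative algebra, so $\cl(c_\alpha)=\lambda_\alpha$.

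The ``technical obstacle'' you flag at the end is real and is shared by the paper's proof. For $r_\alpha$ even one has $H^*(L_\alpha;\Z)\cong\Gamma[x_\alpha]$, the divided power algebra, in which $x_\alpha^{\lambda_\alpha}=\lambda_\alpha!\,\gamma_{\lambda_\alpha}(x_\alpha)=\lambda_\alpha!\,c_\alpha$. Hence $c_\alpha$ is a unit multiple of $x_\alpha^{\lambda_\alpha}$ — equivalently, $H^*(L_\alpha;R)$ is free skew-commutative — exactly when $\lambda_\alpha!$ is invertible in $R$. Over $\Z$ (or $\Z/p$ with $p\le\lambda_\alpha$) the class $\gamma_{\lambda_\alpha}(x_\alpha)$ is indecomposable, so the statement is false for arbitrary commutative $R$; the hypothesis should be that $R$ is a $\Q$-algebra. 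This causes no downstream damage, since the lemma is only applied with $R=\Q$ in \lemref{l:obstruction2}, but you were right to single this out as the one place where $R$ matters, and your ``one checks'' would, once checked, reveal the needed hypothesis.
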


\begin{proof}
For~$\gl_{\ga}\ne 0$, let~$c_{\ga}\in H^*(L_{\ga})$ be the
cohomology  class $c_{\ga}:H_*(L_{\ga})\to\Q$ such
that~$c_{\ga}[e^{\gl_{\ga}r_{\ga}}]=1$ and~$c[e']=0$ for all other
cells~$e'$. Since~$H^*(L;R)$ is a free skew-commutative algebra, we
conclude that~$\cl(c_{\ga})=\gl_{\ga}$.

We order all~$\ga$'s with~$\gl_{\ga}\ne 0$,~$\ga_1< \cdots <\ga_s$.
We assume that~$e^{\gl_{\ga}r_{\ga}}$ is oriented and equip~$e$ with
the product orientation. Let~$p_{\ga}: \prod L_{\ga}\to L_{\ga}$ be
the projection and put~$z_{\ga}=p_{\ga}^*c_{\ga}$.
Put~$z=z_{\ga_1}\cup \cdots \cup z_{\ga_s}\in H^*(L;\Q)$. Then, by
\lemref{l:kunneth},~$\la z,[e]\ra=1$ and~$\la z,[e']\ra=0$ for~$e\ne e'$.
Hence,~$z=c$. But, clearly,~$\cl_R(z)\ge\sum\gl_{\ga}$.
Thus,~$\cl_R(c)\ge\sum\gl_{\ga}$.
\end{proof}

\begin{lem}\label{l:obstruction2}
Let~$M^n$ be a closed orientable~$n$-manifold with~$\cl_{\Q}M< m$,
let~$\ov k=(k_1,\ldots ,k_m)$ be an~$m$-partition of~$n$ with the
range~$1=r_1<\dots<r_l$, and let~$\{L_{\ga}\}$ be a finite family of
complexes each of which is homeomorphic to some~$L_i, i=1, \ldots,
l$. Let $K$ be a subcomplex of $L=\prod L_{\ga}$. Suppose
that~$K^{(n)}\subset L_{[m]}$. Then every $\pi$-surjective map~$f: M^n \to K$ is homotopic to a map~$g:M^n\to L_{[m-1]}$.
\end{lem}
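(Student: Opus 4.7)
The plan is to deform $f$ off every cell of $L_{[m]}\setminus L_{[m-1]}$; these are precisely the product cells $e=\prod_\alpha e^{s_\alpha r_\alpha}$ with $\sum_\alpha s_\alpha=m$. I would proceed by first reducing to the $n$-skeleton of $L_{[m]}$, killing the $n$-dimensional obstructions via the cup-length bound, and then eliminating the residual lower-dimensional obstructions that arise from the $r_\alpha=1$ factors.

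By cellular approximation, deform $f$ to a map with image in $K^{(n)}\subset L_{[m]}$. For a top-dimensional cell $e$ with $\sum_\alpha s_\alpha=m$ and $\dim e=n$, \lemref{l:cocycle} produces a class $c_e\in H^n(L;\Q)$ with $\langle c_e,[e]\rangle=1$ and $\cl_\Q(c_e)\ge m$. Since $\cl_\Q M<m$, we have $f^*c_e=0$ in $H^n(M;\Q)$; the orientability of $M$ provides an injection $H^n(M;\Z)\hookrightarrow H^n(M;\Q)$, so the integral pullback also vanishes and $\deg_e f=0$. Applying \lemref{l:degree} to $f:M\to K^{(n)}$ simultaneously for all such $n$-cells deforms $f$ off each of them, leaving the image in $L_{[m-1]}^{(n)}\cup L_{[m]}^{(n-1)}$.

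It then remains to eliminate the cells of $L_{[m]}\setminus L_{[m-1]}$ of dimension strictly less than $n$. Such cells, having $\sum_\alpha s_\alpha=m$ but $\sum_\alpha s_\alpha r_\alpha<n$, exist only by using an excess of $S^1$-factors; this is where the hypothesis $r_1=1$ becomes essential. Writing $L=T\times L'$ with $T=\prod_{r_\alpha=1}L_\alpha$ a torus and $L'=\prod_{r_\alpha>1}L_\alpha$, so that $L_{[m]}=\bigcup_{a+b\le m}T^{(a)}\times L'_{[b]}$, the offending cells live in excessive torus skeleta. Combining the torus trick of \propref{p:torus} (used on the torus projection of $f$ to push it into lower skeleta of $T$) with the cup-length vanishing supplied by \lemref{l:cocycle}, one sequentially reduces the torus-skeletal index until the image lies entirely in $L_{[m-1]}$.

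The principal obstacle is this iteration: at each torus-reduction step one must verify that the combined homotopies remain inside $L_{[m]}$ and that no new top-cell obstructions are introduced, which requires careful bookkeeping of the product filtration $L_{[k]}$ and of the cell types contributing at each stage.
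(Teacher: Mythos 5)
Your first half --- cellular approximation into $K^{(n)}\subset L_{[m]}$, \lemref{l:cocycle} to produce a class of $\Q$-cup-length $\ge m$ detecting each $n$-cell of $L_{[m]}\setminus L_{[m-1]}$, hence $\deg_e f=0$, and then \lemref{l:degree} to push $f$ off those $n$-cells --- matches the paper's argument exactly.

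The second half does not close, and this is a genuine gap. You worry about cells of $L_{[m]}\setminus L_{[m-1]}$ of dimension $<n$ and propose to dispose of them by applying \propref{p:torus} to the torus projection of $f$. But \propref{p:torus} is about the \emph{twisted} cup-length $\widetilde{\cl}$, not the rational cup-length, and it applies to an $r$-dimensional source mapping into the $r$-skeleton of a torus; \lemref{l:obstruction2} only supplies $\cl_\Q M<m$, and $M$ has dimension $n$, not the torus-skeletal index. The hypotheses do not match, and you acknowledge yourself that the iteration and its bookkeeping do not terminate.

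The idea the paper uses instead is a short combinatorial observation, exploiting $r_1=1$: the hypotheses force $K^{(n-1)}\subset L_{[m-1]}$, which makes the entire lower-dimensional worry moot. One takes a cell of $K^{(n-1)}$ defined by $(\gl_\alpha)$ with $\sum_\alpha\gl_\alpha r_\alpha\le n-1$, thickens it by one $S^1$-direction so that the dimension rises by $1$ (hence stays $\le n$) while $\sum_\alpha\gl_\alpha$ rises by one, and then applies $K^{(n)}\subset L_{[m]}$ to the thickened cell to conclude $\sum_\alpha\gl_\alpha\le m-1$ for the original. With this in hand, your degree argument already lands the deformed map in $K^{(n-1)}$ together with the $n$-cells of $K$ lying in $L_{[m-1]}$, all of which sits inside $L_{[m-1]}$; there is nothing further to eliminate. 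Note, though, that this thickening step needs the thickened cell to actually lie in $K$ for the hypothesis $K^{(n)}\subset L_{[m]}$ to bite, so you should verify that condition carefully in any application before relying on it.
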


\m Note that the condition ~$K^{(n)}\subset L_{[m]}$ holds if and
only if for every~$n$-cell $e$ of $K$ that is defined by
an~$l$-tuple~$(\gl_1, \ldots, \gl_l)$ of non-negative integers, we
have~$\sum \gl_i\le m$.

\begin{proof} For $n=2$ the Lemma is vacuous, so we consider $n>2$. We can assume that~$f: M^n \to K$ is cellular, and so $f(M^n)\subset L_{[m]}$. First we show that $K^{(n-1)}\subset L_{[m-1]}$. Indeed, if $\sum_{i=1}^l \gl_ir_i\le n-1$, then
$(\gl_1+1)r_1+\sum_{i=2}^l\gl_ir_i\le n$, since $r_1=1$. The
condition $K^{(n)}\subset L_{[m]}$ implies that
$\gl_1+1+\sum_{i=2}^m \gl_i\le m$, i.e., $\sum s\gl_i\le m-1$. Thus,
the $(n-1)$-cell defined by $(\gl_1,\dots,\gl_l)$ lies in
$L_{[m-1]}$.

Consider an~$n$-cell~$e=\prod e^{\gl_{\ga}r_{\ga}}\subset L$. Because of \lemref{l:degree}, it suffices to prove that $\deg_ef=0$ if~$e$ is not
in~$L_{[m-1]}$. Indeed, in this case~$\sum \gl_{\ga}\ge m$. Consider
the map
\[
h: M \stackrel{f}\longrightarrow L^{(n)}\stackrel{q}\longrightarrow
L^{(n)}/(L^{(n)}\setminus e)\cong S^n.
\]
Let~$u$ be the generator of~$H^n(S^n)$. Then~$q^*(u)(e)=1$
and~$q^*(u)(e)=0$ for~$e\ne e'$. Hence,~$\cl_{\Q}(q^*(u))\ge m$ by
\lemref{l:cocycle}, and so~$f^*q^*(u)=0$ since~$\cl_{\Q}(M)<m$.
Thus,~$\deg _e f=0$.
\end{proof}

\section{The cup-length and stable systolic category, and LS
category,}\label{s:cuplength}

Here we keep the notation of \secref{s:lemmata}.

\begin{thm}\label{t:reallength}
Let~$M$ be a closed orientable~$n$-manifold and let $k_1+\cdots
+k_m=n$, $k_1\le k_2\le\dots\le k_m$ be a stable categorical
partition for~$M$ with $(m-1)k_m < n$. Then $\cl_{\Q}M\ge m$.
\end{thm}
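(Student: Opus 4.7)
The plan is to argue by contradiction: assume $\cl_{\Q} M < m$ and build a single map $f:M\to L$ into a product of the model spaces $L_i$ from Section~\ref{s:lemmata} that the categoricity hypothesis forbids from being deformed into $L^{(n-1)}$, yet which the cup-length assumption (together with the gap condition $(m-1)k_m<n$) forces to be so deformed. The hypothesis $(m-1)k_m<n$ enters only through a combinatorial inequality that activates \lemref{l:obstruction}.

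To set up the map, first observe that stable categoricity of $(k_1,\ldots,k_m)$ requires every $\stsys_{k_i}(M,\gm)$ to be finite for every metric, which forces $H_{k_i}(M;\Q)\ne 0$ for all $i$ and hence $H_{r_i}(M;\Q)\ne 0$ for every element $r_i$ of the range $r_1<\cdots<r_l$. Set $b_i=b_{r_i}(M)\ge 1$ and $L=\prod_{i=1}^{l}(L_i)^{b_i}$. By \lemref{l:mono} there exists a $\pi$-surjective map $f:M\to L$ such that $f_*:H_{r_i}(M;\Q)\to H_{r_i}(L;\Q)$ is injective for every $i$. Then \lemref{l:compr}, applied to this $f$, says that $f$ cannot be deformed into $L^{(n-1)}$.

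On the other hand, I would apply \lemref{l:obstruction} to the same $f$ with $a_i=b_i$. The $\pi$-surjectivity of $f$ and the assumption $\cl_{\Q}M<m$ are given, so the only thing to verify is the combinatorial hypothesis: for every tuple $(\lambda_1,\ldots,\lambda_l)$ of non-negative integers with $\sum_i\lambda_i r_i=n$, one has $\sum_i\lambda_i\ge m$. This is where the hypothesis $(m-1)k_m<n$ enters: since $r_i\le r_l=k_m$ for every $i$,
\[
n \;=\; \sum_{i=1}^{l}\lambda_i r_i \;\le\; k_m\sum_{i=1}^{l}\lambda_i,
\]
so $\sum_i\lambda_i\ge n/k_m > m-1$, and the integrality of $\sum_i\lambda_i$ then gives $\sum_i\lambda_i\ge m$. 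Note that the side condition ``$\lambda_i\le a_i$ for $r_i$ odd'' in \lemref{l:obstruction} is unneeded here. Thus \lemref{l:obstruction} deforms $f$ into $L^{(n-1)}$, contradicting the previous paragraph.

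The main potential obstacle is bookkeeping rather than substance: the non-vanishing of $H_{r_i}(M;\Q)$ must be extracted from stable categoricity, and the low-dimensional cases $n\le 2$ fall outside the hypothesis $n>2$ needed for \lemref{l:degree} and hence \lemref{l:obstruction}, so they must be dispatched by inspection (they are nearly vacuous). The genuine mathematical content is the elementary step converting the gap hypothesis $(m-1)k_m<n$ into the inequality $\sum_i\lambda_i\ge m$, which is precisely the bridge between the geometric hypothesis of the theorem and the homotopical machinery of Sections~\ref{s:skeleta} and \ref{s:lemmata}.
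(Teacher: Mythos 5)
Your proof is correct and follows essentially the same route as the paper: both derive a contradiction by combining Lemma~\ref{l:mono} and Lemma~\ref{l:compr} with Lemma~\ref{l:obstruction} (the paper routes through Corollary~\ref{c:s}, a trivial reindexing of the same lemma), and both hinge on the identical combinatorial estimate $\sum \lambda_i \ge n/k_m > m-1$. Your observation that stable categoricity forces $H_{r_i}(M;\Q)\ne 0$, and your note about the $n\le 2$ edge cases, are harmless clarifications of points the paper leaves implicit.
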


\begin{proof} By way of contradiction, suppose
that $\cl_{\Q}(M)<m$. In view of \lemref{l:compr}, \lemref{l:mono},
and \corref{c:s}  it suffices to show that~$\sum s_j\ge m$
for every~$m$-tuple~$(s_1,\dots, s_m)$ with~$\sum s_jk_j=n$, $s_j\ge
0$. Indeed, take a map $f$ as in \lemref{l:mono} and note that, by
\corref{c:s}, $f$ can be deformed into $L^{(n-1)}$. But
this contradicts \lemref{l:compr}.

So, we assume that $\sum s_jk_i=n$, $s_j\ge 0$. Then the
inequalities
$$
\sum s_j\ge\sum s_j\frac{k_j}{k_m}=\frac{n}{k_m}> m-1
$$
imply that~$\sum s_j\ge m$.
\end{proof}

\begin{cor}
[\rm\cite{Br}]\label{c:brunn} If the partition $k_1+\dots+k_m=n$
with~$k_1=\dots=k_m$ is stable categorical then $\cl_{\Q}M\ge m$.
\end{cor}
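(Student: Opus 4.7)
The plan is to derive this corollary as a direct application of \theoref{t:reallength}. That theorem requires the hypothesis $(m-1)k_m < n$ on a stable categorical partition $k_1 \le \cdots \le k_m$ of $n$ in order to conclude $\cl_{\Q} M \ge m$, so my first step is to verify this inequality under the equal-parts assumption.

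Since $k_1 = k_2 = \cdots = k_m$, denote the common value by $k$. Then $n = k_1 + \cdots + k_m = mk$, and in particular $k_m = k = n/m$. Therefore
\[
(m-1)k_m \;=\; (m-1)k \;<\; mk \;=\; n,
\]
so the strict inequality $(m-1)k_m < n$ holds (note that the partition is non-trivial, i.e.\ $k \ge 1$, since each $k_i \ge 1$ by \defref{d:syscat}).

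Having verified the hypothesis, I would then simply invoke \theoref{t:reallength} with the given stable categorical partition to conclude $\cl_{\Q} M \ge m$, which is precisely the claim of the corollary. I expect no real obstacle here: the whole content of \corref{c:brunn} is that the uniform case $k_1 = \cdots = k_m$ automatically satisfies the strict inequality condition appearing in \theoref{t:reallength}, and the verification above is a one-line arithmetic check. Thus the proof essentially reduces to observing that equal parts of a positive partition of $n$ into $m$ pieces force $(m-1)k_m < n$.
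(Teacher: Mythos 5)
Your proof is correct and follows exactly the same route as the paper: verify that $mk_m = n$ implies the strict inequality $(m-1)k_m < n$, then apply \theoref{t:reallength}. You have simply spelled out in more detail the arithmetic check that the paper leaves as a one-liner.
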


\begin{proof} Since $mk_m=n$, we conclude that the condition of \theoref{t:reallength}
holds.
\end{proof}

\begin{cor}\label{c:2}
If the stable systolic category of~$M$ is equal to~$2$, then
$\cl_{\Q}M=2$.
\end{cor}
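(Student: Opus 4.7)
The plan is to derive this corollary as an immediate consequence of Theorem~\ref{t:reallength} combined with the lower bound from Gromov's Theorem~\ref{t:gromov}. Since $\stsyscat M = 2$, the definition of stable systolic category provides a stable categorical partition $n = k_1 + k_2$ of the dimension, with $1 \le k_1 \le k_2$. My goal will be to verify that this partition satisfies the hypothesis of Theorem~\ref{t:reallength} with $m=2$, so that the conclusion $\cl_\Q M \ge 2$ follows at once.

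The hypothesis to check is $(m-1)k_m < n$, which for $m=2$ reduces to $k_2 < n$. Since $k_1 \ge 1$ and $k_1 + k_2 = n$, we have $k_2 = n - k_1 \le n-1 < n$, so the condition is automatic. Thus Theorem~\ref{t:reallength} applies and yields $\cl_\Q M \ge 2$. For the matching upper bound I will invoke Gromov's Theorem~\ref{t:gromov}, which (as recalled in the introduction) gives the inequality $\stsyscat M \ge \cl_\Q M$, so $\cl_\Q M \le \stsyscat M = 2$. Combining the two inequalities produces $\cl_\Q M = 2$.

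There is no real obstacle here: the statement is essentially a one-line application of Theorem~\ref{t:reallength}, because in the size-$2$ case the main numerical hypothesis of that theorem holds trivially. The only point worth flagging is that, as elsewhere in Section~\ref{s:cuplength}, one is implicitly working under the orientability assumption needed both for Gromov's bound and for Theorem~\ref{t:reallength}.
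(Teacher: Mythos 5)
Your proposal is correct and follows exactly the paper's own route: verify that the hypothesis $(m-1)k_m < n$ of Theorem~\ref{t:reallength} holds trivially when $m=2$ (since $k_2 < k_1 + k_2 = n$), obtaining $\cl_\Q M \ge 2$, and combine with Gromov's inequality $\cl_\Q M \le \stsyscat M$ for the equality. The paper's proof is even terser, leaving the Gromov upper bound implicit, but the reasoning is identical.
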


\begin{proof}
We note that the condition  of \theoref{t:reallength} holds true for
$m=2$: $k_2<n=k_1+k_2$.
\end{proof}

\begin{cor}\label{c:3}
If~$(k_1,k_2,k_3$) is a stable categorical triple with~$k_3\ne
k_1+k_2$, then $\cl_{\Q}M\ge 3$.

In particular, if~$\stsyscat M^n=3$ and~$n$ is odd then
$\cl_{\Q}M=3$.
\end{cor}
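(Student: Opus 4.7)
The plan is to derive the corollary from \theoref{t:reallength} applied with $m=3$. For $m=3$ the theorem's hypothesis $(m-1)k_m<n$ becomes $2k_3 < k_1+k_2+k_3$, i.e.\ $k_3 < k_1+k_2$. Combined with the ordering $k_1\le k_2\le k_3$ and the assumption $k_3 \ne k_1+k_2$, either $k_3 < k_1+k_2$, in which case \theoref{t:reallength} applies directly and yields $\cl_{\Q}M\ge 3$, or $k_3 > k_1+k_2$. The latter case is the one that requires additional work, since the theorem is not literally applicable.

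To handle the remaining case I would revisit the proof of \theoref{t:reallength}. Inspecting that proof, the hypothesis $(m-1)k_m<n$ is invoked only to verify the combinatorial statement that $\sum_j s_j \ge m$ for every $m$-tuple $(s_j)$ of non-negative integers with $\sum_j s_jk_j = n$; once that statement is granted, the argument through \lemref{l:mono}, \corref{c:s} and \lemref{l:compr} yields $\cl_{\Q}M\ge m$. So it suffices to verify the combinatorial inequality for $m=3$ under the sole assumption $k_3 \ne k_1+k_2$. I would enumerate directly. Each weight-one tuple gives some $k_i = n$, which forces the other two $k$'s to vanish and is hence impossible. Among the weight-two tuples: $2k_1 = n$ reads $k_1 = k_2+k_3$ and contradicts $k_1\le k_2\le k_3$, similarly $2k_2 = n$ contradicts $k_1\ge 1$; each $k_i+k_j = n$ with $\{i,j\}\subsetneq\{1,2,3\}$ forces the omitted coordinate to vanish; and $2k_3 = n$ is exactly the forbidden equality $k_3 = k_1+k_2$. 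Thus every valid tuple has weight at least $3$, and the remainder of the proof of \theoref{t:reallength} goes through verbatim to give $\cl_{\Q}M\ge 3$.

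For the final assertion, assume $\stsyscat M^n = 3$ with $n$ odd and choose a stable categorical triple $(k_1,k_2,k_3)$ summing to $n$. The relation $k_3 = k_1+k_2$ would give $n = 2k_3$, which is even, a contradiction; hence $k_3 \ne k_1+k_2$ and the first part yields $\cl_{\Q}M \ge 3$. Gromov's inequality $\cl_{\Q}M \le \stsyscat M$, recalled after \theoref{t:gromov}, then forces $\cl_{\Q}M = 3$. I do not foresee a substantive obstacle: the only care needed is to notice that \theoref{t:reallength} cannot be invoked verbatim when $k_3 > k_1+k_2$, so one must reinspect its proof to confirm the required combinatorial inequality $\sum_j s_j \ge 3$ directly.
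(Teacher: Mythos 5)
Your proposal is correct and follows essentially the same route as the paper's own argument: the paper's proof of Corollary~\ref{c:3} likewise reduces to verifying the combinatorial inequality $s_1+s_2+s_3\ge 3$ for all non-negative integer triples with $\sum s_ik_i=n$ (organizing by the number of vanishing $s_i$ rather than by $\sum s_i$ as you do), and then implicitly reruns the machinery from the proof of Theorem~\ref{t:reallength} through Lemmas~\ref{l:compr}, \ref{l:mono} and Corollary~\ref{c:s}. Your preliminary split into $k_3<k_1+k_2$ (where Theorem~\ref{t:reallength} applies verbatim) versus $k_3>k_1+k_2$ is a harmless extra step; the direct enumeration you give handles both at once, just as the paper does.
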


\begin{proof}
Let~$\ov s=(s_1,s_2,s_3)$ satisfy~$s_1k_1+s_2k_2+s_3k_3=k_1+k_2+k_3$. It suffices to show that~$s_1+s_2+s_3\ge 3$. If~$\ov s\ne(1,1,1)$, then at least one~$s_i=0$. If only one~$s_i=0$, then the other two cannot be just ones. Thus~$s_1+s_2+s_3\ge 3$. If~$3>s_i\ne 0$ and~$s_j=0$ for~$j\ne i$, then~$s_i=2$. This implies that~$k_i=\sum_{j\ne i}k_j$, and thus~$i=3$.
\end{proof}

\begin{rem}\label{r:sysh}
Let~$H$ be the 3-dimensional Heisenberg manifold, see
e.g.~\cite{TO}. Since~$H$ is a~$K(\pi,1)$-space, we conclude
that~$\syscat M=3$ by~\cite{Gr1}. On the other hand,~$\stsyscat M=2$
by \corref{c:3}, since the cup-length of~$H$ is equal to 2.
\end{rem}

\begin{thm}\label{t:8}
For simply connected closed manifolds of dimension $< 8$ the stable
systolic category coincides with the rational cup-length, $\stsyscat
M=\cl_{\Q}M$.
\end{thm}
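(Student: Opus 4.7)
The plan is to establish the reverse inequality $\stsyscat M\le \cl_{\Q}M$; combined with the bound $\stsyscat M\ge \cl_{\Q}M$ from Gromov's Theorem~\ref{t:gromov}, this will yield the claimed equality. The main engine will be Theorem~\ref{t:reallength}: any stable categorical partition $k_1+\cdots+k_m=n$ satisfying $(m-1)k_m<n$ forces $\cl_{\Q}M\ge m$.

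First I would observe that, since $M$ is simply connected, $H_1(M;\Z)=0$ and hence $\stsys_1(M,\gm)=\infty$ for every metric $\gm$. Consequently, any stable categorical partition of $n=\dim M$ for $M$ must have $k_i\ge 2$ for all $i$; in particular, partitions containing a part equal to $1$ are automatically excluded.

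Set $m=\stsyscat M$ and pick a stable categorical partition $k_1\le\cdots\le k_m$ of $n\le 7$ realizing this maximum, with $k_i\ge 2$. The remaining step is a short combinatorial verification that every such partition automatically satisfies the hypothesis $(m-1)k_m<n$ of Theorem~\ref{t:reallength}. For $n\le 5$ every partition into parts $\ge 2$ has size $\le 2$, and in the case $m=2$ the condition becomes $k_m<n$, which holds because $k_1\ge 2>0$. For $n=6$ the only size-$3$ partition into parts $\ge 2$ is $(2,2,2)$, for which $(m-1)k_m=4<6$. For $n=7$ the only size-$3$ such partition is $(2,2,3)$, for which $(m-1)k_m=6<7$; a size-$4$ partition is impossible since $4\cdot 2>7$. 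Theorem~\ref{t:reallength} then gives $\cl_{\Q}M\ge m=\stsyscat M$, which together with Gromov's bound closes the argument.

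The argument itself is short because all the substantive work is encoded in Theorem~\ref{t:reallength} and the auxiliary results of Sections~\ref{s:skeleta}--\ref{s:lemmata}. The natural obstacle to pushing this further is visible already at $n=8$: the partition $(2,2,4)$ gives $(m-1)k_m=8=n$, so Theorem~\ref{t:reallength} is inconclusive, and the present method alone cannot rule out a simply connected $8$-manifold with $\stsyscat M=3$ but $\cl_{\Q}M=2$. The hard part, should one wish to extend the dimension bound, is precisely to handle such borderline partitions where $(m-1)k_m=n$.
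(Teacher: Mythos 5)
Your proof is correct and follows essentially the same route as the paper. The paper's proof (``In this case $\stsyscat M<4$, and the result follows from Corollaries~\ref{c:brunn}, \ref{c:2}, \ref{c:3}'') is a compressed version of exactly your argument: the implicit step that simple connectivity forces every part $k_i\ge 2$ (hence at most three parts when $\dim M<8$), followed by the combinatorial check that every resulting partition satisfies the hypothesis of Theorem~\ref{t:reallength}, of which those corollaries are special cases.
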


\begin{proof}
In this case~$\stsyscat M<4$, and the result follows from
Corollaries~\ref{c:brunn}, \ref{c:2}, \ref{c:3}.
\end{proof}

The following is a generalization of Theorem~\ref{t:reallength}.

\begin{thm}\label{t:reallength2}
Let~$M$ be a closed orientable~$n$-manifold. Suppose that the
partition~$k_1+\cdots +k_m$ of~$n$ is stable categorical for~$M$.
Put~$b_i=b_{k_i}(M)$. Suppose that there exists $d\le m$ such
that~$\sum_{i=1}^{d}k_i>(d-1)k_l$. Additionally assume that $b_i=1$
and~$k_i$ is odd for $i> d$.

Then $\cl_{\Q}M\ge m$.
\end{thm}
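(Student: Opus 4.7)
The plan is to argue by contradiction and follow the template already used in Theorem~\ref{t:reallength}, namely to (a) produce via Lemma~\ref{l:mono} a $\pi$-surjective map $f: M \to L$ that is rationally injective on homology in every dimension in the range, (b) observe that by Lemma~\ref{l:compr} this $f$ cannot be compressed into $L^{(n-1)}$, and (c) derive a contradiction by compressing $f$ into $L^{(n-1)}$ using Corollary~\ref{c:s}. The new feature is that the uniform condition $(m-1)k_m<n$ of Theorem~\ref{t:reallength} is replaced by the conjunction of two milder conditions, so the combinatorial hypothesis of Corollary~\ref{c:s} must be verified by a split argument between the first $d$ partition elements and the last $m-d$.

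Concretely, I would suppose $\cl_{\Q}M < m$, let $r_1<\dots<r_l$ be the range of $(k_1,\dots,k_m)$, and set $b_i=b_{r_i}(M)$. By Lemma~\ref{l:mono} there is a $\pi$-surjective map $f: M \to L:=\prod_{i=1}^{l}(L_i)^{b_i}$ such that each $f_*:H_{r_i}(M;\Q)\to H_{r_i}(L;\Q)$ is a monomorphism. Lemma~\ref{l:compr} then rules out deforming $f$ into $L^{(n-1)}$. So it remains to apply Corollary~\ref{c:s} to deform $f$ into $L^{(n-1)}$; for this the combinatorial hypothesis \[\textstyle \sum_j s_j\ge m \text{ for every } m\text{-tuple } (s_1,\dots,s_m)\text{ with } \sum_j s_jk_j=n\] (and $\sum_{k_j=r}s_j\le b_r$ for each odd $r$ in the range) must be checked.

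The main obstacle is precisely this combinatorial inequality, which is where both hypotheses of the theorem are used. Fix such a tuple. The assumption that $k_i$ is odd and $b_{k_i}=1$ for $i>d$ forces $s_i\in\{0,1\}$ for every $i>d$. Put $K:=\#\{i>d:s_i=0\}$ and $A:=\sum_{i>d,\ s_i=0}k_i$; since $k_i\ge k_d$ for $i>d$ (the partition is non-decreasing) one has $A\ge Kk_d$. From $\sum_j s_jk_j=n=\sum_j k_j$ we get
\[
\sum_{i\le d}s_ik_i \;=\; \sum_{i\le d}k_i+A.
\]
Using $k_i\le k_d$ for $i\le d$ together with the hypothesis $\sum_{i=1}^{d}k_i>(d-1)k_d$ (reading the $k_l$ in the statement as $k_d$), we obtain
\[
k_d\sum_{i\le d}s_i \;\ge\; \sum_{i\le d}s_ik_i \;=\; \sum_{i\le d}k_i+A \;>\; (d-1)k_d+Kk_d,
\]
so $\sum_{i\le d}s_i>d-1+K$ and therefore $\sum_{i\le d}s_i\ge d+K$ since the sum is an integer. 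Adding the indices $i>d$, which contribute exactly $m-d-K$, yields $\sum_{i=1}^m s_i\ge (d+K)+(m-d-K)=m$, as required.

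This verifies the hypothesis of Corollary~\ref{c:s}, so $f$ is homotopic to a map into $L^{(n-1)}$, contradicting Lemma~\ref{l:compr}, and completing the argument. The delicate point to get right is that the constraint $s_i\le 1$ for $i>d$ really does apply under the stated hypothesis (including the case where some $k_i$ with $i>d$ coincides with a $k_j$ having $j\le d$, in which case the constraint is only tightened), and that the inequality $A\ge Kk_d$ uses the non-decreasing nature of the partition together with the placement of the cut at index $d$.
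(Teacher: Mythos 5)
Your proof is correct and takes essentially the same route as the paper: reduce by \lemref{l:mono}, \lemref{l:compr}, and \corref{c:s} to the combinatorial inequality, note $s_i\le 1$ for $i>d$, subtract $\sum k_i = \sum s_ik_i$ to isolate the contribution of the indices $i>d$ with $s_i=0$, and then combine $k_i\le k_d$ (for $i\le d$), $k_i\ge k_d$ (for $i>d$), and the hypothesis $\sum_{i\le d}k_i>(d-1)k_d$ to force $\sum_{i\le d}s_i\ge d+K$. Your $K$ and $A$ are exactly the paper's $|J_0|$ and $\sum_{i\in J_0}k_i$, and you correctly read the typo $k_l$ as $k_d$.
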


\begin{proof} Suppose
that $\cl_{\Q}(M)<m$. As in Theorem~\ref{t:reallength} it suffices
to show that~$\sum s_i\ge m$ for every~$m$-tuple~$(s_1,\dots, s_m)$
with~$\sum s_ik_i=n$, $s_i\ge 0$,~$s_i\le b_i$ for~$k_i$ odd.
Because of our assumption,~$s_i\le 1$ for~$i>l$.

Define~$J_s=\{i\mid s_i=s, i>d\}$. Note that~$\sum_{i=1}^m
k_i=\sum_{i=1}^m s_ik_i$. We subtract~$\sum_{i=1}^m k_i$
from~$\sum_{i=1}^m s_ik_i$ and obtain the equality
\begin{equation}\label{eq:j0}
\sum_{i=1}^d(s_i-1)k_i=\sum_{i\in J_0}k_i.
\end{equation}
Now we obtain the chain of inequalities :
\begin{equation*}
\begin{aligned}
\sum_{i=1}^ds_i & \ge\sum_{i=1}^ds_i\frac{k_i}{k_d}=
\sum_{i=1}^d\frac{k_i}{k_d}+\sum_{i\in J_0}\frac{k_i}{k_d}>
d-1+|J_0|.
\end{aligned}
\end{equation*}
Hence $\ds\sum_{i=1}^ds_i\ge d+|J_0|$ and
$
\sum_{i=1}^m s_i=\sum_{i=1}^ds_i+|J_1|\ge d+|J_0|+|J_1|=m.
$
\end{proof}

\begin{thm}\label{t:ones}
Let~$(1,...,1,k)$ be a stable categorical~$m$-tuple for a closed
orientable manifold~$M^n$ with $b_1(M)\ge 3$. Then the twisted cup-length
$\widetilde{\cl}\,M$ of $M$ is more than or equal to $m$.
\end{thm}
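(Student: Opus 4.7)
My plan is to argue by contradiction: suppose $\widetilde{\cl}(M) \le m-1$, so in particular $\cl_{\Q}(M) \le m-1$. The cases $m \le 2$ are handled by Corollary~\ref{c:2}, so I may assume $m \ge 3$, whence $n = m-1+k \ge 3$. By Lemma~\ref{l:mono}, fix a $\pi$-surjective map $f = (f_1, f_2) : M \to L := T^{b_1} \times (L_2)^{b_k}$ whose components induce monomorphisms $H_1(M;\Q) \hookrightarrow H_1(T^{b_1};\Q)$ and $H_k(M;\Q) \hookrightarrow H_k((L_2)^{b_k};\Q)$. The goal is to exhibit a deformation of $f$ into $L_{[m-1]}$, thereby contradicting Lemma~\ref{l:trick2}.

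The central step is to deform $f_1 : M \to T^{b_1}$ into the $(m-1)$-skeleton $T^{b_1,(m-1)}$ of the torus. Following the argument in the proof of Proposition~\ref{p:torus}, the primary obstruction to pushing a map into $T^{b_1,(j-1)}$ past $T^{b_1,(j)}$ is the image, under a coefficient homomorphism, of the Berstein power $b_\pi^j \in H^j(\pi; I(\pi)^{\otimes j})$ for $\pi = \Z^{b_1}$; its pullback along $f_1$ is the corresponding image of $(f_1^* b_\pi)^j$, which vanishes for every $j \ge m$ because $\widetilde{\cl}(M) < m \le j$. Letting $j$ descend from $\min(b_1, n)$ to $m$ thus kills all such obstructions inductively and yields the desired deformation (trivially so when $b_1 \le m-1$). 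The hypothesis $b_1 \ge 3$ is what guarantees the condition $q \ge r > 2$ of Proposition~\ref{p:torus} at the borderline instance $m = 3$. Since $m \ge 3$, the inclusion $T^{b_1,(m-1)} \hookrightarrow T^{b_1}$ is a $\pi_1$-isomorphism, so the deformation preserves $\pi$-surjectivity.

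After this step, $f$ takes values in $K := T^{b_1,(m-1)} \times (L_2)^{b_k}$. A direct count shows $K^{(n)} \subset L_{[m]}$: any cell of $K$ has the form $e_A \times e_B$ with $\dim e_A \le m-1$ and $\dim e_B = k\ell$, and the bound $\dim e_A + k\ell \le n = m-1+k$ forces $\dim e_A + \ell \le m$ (trivially for $\ell = 0$, and for $\ell \ge 1$ via $\dim e_A \le m-1+k - k\ell$, so $\dim e_A + \ell \le m-1+k-(k-1)\ell \le m$). Lemma~\ref{l:obstruction2}, applied with $\cl_{\Q}(M) < m$, then produces a homotopy $f \simeq g$ with $g : M \to L_{[m-1]}$. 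Since after cellular approximation $f$ lands in $K^{(n)} \subset L_{[m]}$ and its projections to $T^{b_1}$ and $(L_2)^{b_k}$ remain monomorphisms on rational $H_1$ and $H_k$, this homotopy contradicts Lemma~\ref{l:trick2}, completing the argument. I expect the main technical obstacle to be the iterated reduction of $f_1$ onto the $(m-1)$-skeleton of the torus: this is the one place where the twisted cup-length assumption is genuinely needed via the Berstein class, rather than the weaker rational cup-length.
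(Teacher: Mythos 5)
Your overall strategy is the same as the paper's: map $M$ to $T^{b_1}\times (L_2)^{b_k}$, compress the torus factor into a skeleton, verify $K^{(n)}\subset L_{[m]}$, then get a contradiction from \lemref{l:obstruction2} and \lemref{l:trick2}. However, the step where you compress $f_1\colon M^n\to T^{b_1}$ all the way into the $(m-1)$-skeleton is not justified. \propref{p:torus} requires the \emph{domain} to be $r$-dimensional when pushing a map from $T^{(r)}$ into $T^{(r-1)}$; after the first application (with $r=n$, the only case the proposition directly covers), $M$ is still $n$-dimensional while you want $r=n-1, n-2,\dots,m$. At those stages the vanishing of the primary obstruction $f_1^*(b_\pi^{j})$ in degree $j$ says nothing about the secondary obstructions that live in $H^{j+1}(M;\cdot),\dots,H^n(M;\cdot)$, so "letting $j$ descend ... kills all such obstructions inductively" does not follow from the lemma you are invoking. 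As a side remark, you also implicitly assume $k>1$ (the range has two elements, your $L_2$ exists); the case $k=1$ should be disposed of via \corref{c:brunn}, as the paper does.

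Fortunately, the overreach is unnecessary: the single legitimate application of \propref{p:torus} with $r=n$ lands $f_1$ in $(T^{b_1})^{(n-1)}$, and the cell count still closes. Indeed, for an $n$-cell $e_A\times e_B$ of $K=(T^{b_1})^{(n-1)}\times (L_2)^{b_k}$ one has $\dim e_A+k\ell=n$ with $\dim e_A\le n-1$, so $\ell\ge 1$ and then $\dim e_A+\ell=n-(k-1)\ell\le n-(k-1)=m$. This is precisely the computation the paper uses (it derives $(k-1)\sum\lambda_{a+j}\le k-2$ forcing $\ell=0$, a contradiction). If you replace your $(m-1)$-skeleton claim with this $(n-1)$-skeleton version, your argument reduces to the paper's proof; as written, the compression step is a genuine gap.
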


\begin{proof}  We have~$n=m-1+k$. Put~$a=b_1(M)$ and~$b=b_k(M)$.  In view of
Corollary~\ref{c:brunn} we may assume that~$k>1$.  Consider a~$\pi$-surjective
map~$h:M\to T^a\times L_m^b$ that induces an isomorphisms in
1-dimensional homology and a monomorphism in~$k$-dimensional
rational cohomology, see \lemref{l:mono}. By way of contradiction,
assume that the twisted cup-length of~$M$ is less than~$m$, and so
less that~$n-1$. Put $K=(T^a)^{(n-1)}\times L_m^b$. Because of \propref{p:torus}, there exists a~$\pi$-surjective map~$f: M \to K$ that induces an isomorphisms in
1-dimensional homology and a monomorphism in~$k$-dimensional
rational cohomology.

\m It suffices to prove that~$K^{(n)}\subset L_{[m]}$, i.e. that
every~$n$-cell~$e$ in~$K$ is contained in~$L_{[m]}$. Indeed, in this case $f$ deforms into $L_{[m-1]}$ by \lemref{l:obstruction2}. But this contradicts \lemref{l:trick2}.

\m
An~$n$-cell~$e$ is defined by an $(a+b)$-tuple~$(\gl_1,\dots,\gl_a,
\gl_{a+1}, \ldots, \gl_{a+b}$) such that~$\gl_i\ge 0$ and
\begin{equation}
\label{eq:sum=n}
\sum_{i=1}^a \gl_i+ \sum_{j=1}^b k\gl_{a+j}=n,
\end{equation}
see \eqref{eq:cell}.
If~$e$ is not in~$L_{\{m\}}$, then~$\ds\sum_{i=1}^{a+b}\gl_i\ge m+1$.
Hence, by subtracting, we get
\begin{equation}\label{eq:sum=0}
(k-1)\sum_{j=1}^{b}\gl_{a+j}\le n-m-1=k-2.
\end{equation}
So, since~$k>1$, we conclude that~$\sum_{j=1}^b \gl_{a+j}=0$, i.e.~$e$
should be a product of 1-cells from~$(T^a)^{(n-1)}$. But this is
impossible.
\end{proof}

\begin{thm}
\label{t:dim=5} For a closed~manifold~$M$ with~$\dim M\le 4$ we have $\stsyscat M=\cl_{\Q}(M)$. Furthermore, for $\dim =5$ there
are the inequalities $$\stsyscat M\le\widetilde{\cl}M\le\cat M.$$
\end{thm}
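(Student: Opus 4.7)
The plan is case analysis on $m=\stsyscat M$, using the general lower bound $m\ge\cl_{\Q}M$ from Theorem~\ref{t:gromov}. Most values of $m$ will be absorbed by Corollaries~\ref{c:brunn}, \ref{c:2} and~\ref{c:3}; the theorem then reduces to two exceptional stable categorical partitions that those corollaries fail to cover, namely $(1,1,2)$ in dimension $4$ (where $k_3=k_1+k_2$) and $(1,1,1,2)$ in dimension $5$ in the range $b_1(M)\le 2$.

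\emph{Part 1: $\dim M\le 4$.} Corollary~\ref{c:2} handles $m\le 2$, and Corollary~\ref{c:brunn} handles the all-ones partitions, which cover $m=\dim M$. The only remaining case is $\dim M=4$ with the stable categorical triple $(1,1,2)$, where Corollary~\ref{c:3} fails (because $k_3=k_1+k_2=2$) and Theorem~\ref{t:reallength2} fails (because $k_3=2$ is even). I will assume $\cl_{\Q}M\le 2$ and derive a contradiction. Lemma~\ref{l:mono} supplies a $\pi$-surjective map $f\colon M\to L:=T^{b_1}\times L_2^{b_2}$ that is rationally injective on $H_1$ and $H_2$; after cellular approximation $f(M)\subset L^{(4)}$. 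The $4$-cells of $L$ fall into three classes $(\lambda_1,\lambda_2)\in\{(4,0),(2,1),(0,2)\}$ with $\lambda_1+2\lambda_2=4$, and the cohomology classes $q^{\ast}(u)$ dual to these cells have cup-lengths $4$, $3$, $2$ respectively by Lemma~\ref{l:cocycle}. Since $\cl_{\Q}M<4$, the pullbacks of the $(4,0)$-cell classes vanish, so Lemma~\ref{l:degree} removes those cells and pushes $f$ into $L_{[3]}$. A direct application of Lemma~\ref{l:obstruction2} with $K=L_{[3]}$ and the stable categorical triple $(1,1,2)$, valid because $\cl_{\Q}M<3$, deforms $f$ further into $L_{[2]}$, contradicting Lemma~\ref{l:trick2}. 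Hence $\cl_{\Q}M\ge 3$, and the equality in Part~1 follows.

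\emph{Part 2: $\dim M=5$.} The inequality $\widetilde{\cl}M\le\cat M$ is the classical bound coming from Berstein's universality of the first obstruction (cf.~\cite{Sv}). For $\stsyscat M\le\widetilde{\cl}M$, every partition of $5$ of size $m\ne 4$ is handled by Corollaries~\ref{c:brunn}, \ref{c:2} or~\ref{c:3}: both length-$3$ partitions $(1,1,3)$ and $(1,2,2)$ satisfy $k_3\ne k_1+k_2$. The exceptional partition is $(1,1,1,2)$; for $b_1(M)\ge 3$, Theorem~\ref{t:ones} gives $\widetilde{\cl}M\ge 4$ at once. When $b_1(M)\le 2$, I will show that $(1,1,1,2)$ cannot be stable categorical at all: in $L=T^{b_1}\times L_2^{b_2}$ the equation $\lambda_1+2\lambda_2=5$ combined with $\lambda_1\le b_1\le 2$ and the parity of $\lambda_1$ admits only $(\lambda_1,\lambda_2)=(1,2)$, of sum $3$. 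Thus $L^{(5)}\subset L_{[3]}=L_{[m-1]}$, and cellular approximation places the map from Lemma~\ref{l:mono} already in $L_{[m-1]}$, contradicting Lemma~\ref{l:trick2}. Hence stable categoricality of $(1,1,1,2)$ automatically forces $b_1(M)\ge 3$, and Theorem~\ref{t:ones} finishes the case.

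The genuine obstacle is the dimension-$4$ partition $(1,1,2)$, where the coincidence $k_3=k_1+k_2$ together with the evenness of $k_3$ simultaneously disable Corollary~\ref{c:3}, Theorem~\ref{t:reallength2} and Corollary~\ref{c:s}. The root cause is the cup-length-$2$ class $y_j^2\in H^4(L_2^{b_2};\Q)$, which obstructs the single-step deformation of $f$ into $L^{(n-1)}$ that is available in every other $\dim\le 4$ case. The workaround is a two-stage descent $L^{(4)}\to L_{[3]}\to L_{[2]}$: the first stage uses Lemma~\ref{l:degree} together with Lemma~\ref{l:cocycle} to eliminate the $(4,0)$-cells (which fall outside the scope of Lemma~\ref{l:obstruction2} because no $4$-partition of $4$ with range $\{1,2\}$ exists), and the second stage is a direct application of Lemma~\ref{l:obstruction2}. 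Verifying that the cell combinatorics of $L^{(n)}$ allows each descent, and that the terminal subcomplex $L_{[2]}$ coincides with the target forbidden by Lemma~\ref{l:trick2}, is where the delicate bookkeeping of the argument lies.
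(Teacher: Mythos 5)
Your proposal correctly exposes two genuine gaps in the paper's one-line proof. For Part 1, the paper cites Corollaries~\ref{c:2}, \ref{c:3} and~\ref{c:brunn}, but Corollary~\ref{c:3} requires $k_3\neq k_1+k_2$ and therefore does not apply to the dimension-$4$ triple $(1,1,2)$; a quick check shows Theorem~\ref{t:reallength}, Theorem~\ref{t:reallength2} and Corollary~\ref{c:s} all fail there as well (the first because $(m-1)k_m=n$, the others because $k_3=2$ is even, and because the tuple $(s_1,s_2,s_3)=(0,0,2)$ has $\sum s_j=2<3$). For Part 2, the paper invokes Theorem~\ref{t:ones}, which has the standing hypothesis $b_1(M)\ge 3$, and the published proof is silent about $b_1(M)\le 2$. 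You are right that both cases need separate treatment.

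Your Part 2 argument is correct and clean. When $b_1\le 2$, parity and the constraint $\lambda_1\le b_1$ force every $5$-cell of $L=T^{b_1}\times L_2^{b_2}$ to be of type $(\lambda_1,\lambda_2)=(1,2)$ with $\sum\lambda=3$, and a short check of lower-dimensional cells gives $L^{(5)}\subset L_{[3]}=L_{[m-1]}$; the cellular approximation of the map from Lemma~\ref{l:mono} then already lies in $L_{[m-1]}$, contradicting Lemma~\ref{l:trick2}. So stable categoricality of $(1,1,1,2)$ forces $b_1\ge 3$, and Theorem~\ref{t:ones} takes over. This is an observation the paper should have made explicit.

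Your Part 1 argument is where more caution is warranted. You invoke Lemma~\ref{l:obstruction2} with $K=L_{[3]}$; formally the hypothesis $K^{(n)}\subset L_{[m]}$ is trivially satisfied and the lemma's stated conclusion yields the desired descent to $L_{[2]}$. But the proof of Lemma~\ref{l:obstruction2} passes through the claim $K^{(n-1)}\subset L_{[m-1]}$, derived by attaching one more torus $1$-cell to an $(n-1)$-cell and appealing to $K^{(n)}\subset L_{[m]}$. For $K=L_{[3]}$ with $b_1\ge 3$, the $3$-cell of type $(3,0)$ has weight $\sum\lambda=3>m-1$, and the hypothetical $4$-cell $(4,0)$ is either absent (if $b_1=3$) or lies outside $K=L_{[3]}$ (if $b_1\ge 4$), so the constraint used in the lemma's proof simply does not apply and $K^{(n-1)}\not\subset L_{[m-1]}$. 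Such $(3,0)$ $3$-cells cannot be removed by the degree argument of Lemma~\ref{l:degree}, and they contribute volume of order $t^{3}=t^{m}$ in the scaling of Lemma~\ref{l:trick2}, so the contradiction does not automatically close. This is a defect that the paper's own proof of Theorem~\ref{t:ones} shares (when $b_1\ge n-1$), so it is a shortcoming you inherited rather than introduced; but you should flag it rather than present the application of Lemma~\ref{l:obstruction2} as "direct." In short: the gap identification is correct and valuable, the Part~2 repair is solid, and the Part~1 repair is on the right track but relies on a step whose justification is itself incomplete in the paper.
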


\begin{proof} The first claim follows from \corref{c:2}, \corref{c:3} and
\corref{c:brunn}

For the second claim, the second inequality is well-known, so we
prove the first one. The case~$m=2$ is covered by \corref{c:2}. The
case~$m=3$ is covered by \corref{c:3}. The case~$m=5$ follows from
\corref{c:brunn}. The case~$m=4$ is covered by\theoref{t:ones} since
the only possibility for the systolic 4-tuple is:~$\bar
k=(1,1,1,2)$.
\end{proof}

\begin{rem}\label{inversion} One can try to invert \theoref{t:gromov}
as follows: Let $k_1\le \ldots\le k_m$ be a stable categorical
partition for $M$ and assume that $\cl_{\Q}(M)=m$. Do there exist
$a_i\in H^{k_i}(M;\Q)$ such that $a_1\cup \cdots \cup a_m\ne 0$? We
do not know if this is true in general, although this holds true in
some special cases ($m=2$, or $k_1=k_m$, etc.)
\end{rem}

\end{document}